\providecommand{\tabularnewline}{\\}
\providecommand{\algorithmname}{Algorithm}
\numberwithin{equation}{section}
\numberwithin{figure}{section}
 \newcommand\numberedlinesswitch{0}
 \newcommand\enablelinenumbering{\renewcommand\numberedlinesswitch{1}}
\newtheorem{thm}{Theorem}
  \newtheorem{defn}[thm]{Definition}
  \newtheorem{prop}[thm]{Proposition}
\newenvironment{lyxlist}[1]
{\begin{list}{}
{\settowidth{\labelwidth}{#1}
 \setlength{\leftmargin}{\labelwidth}
 \addtolength{\leftmargin}{\labelsep}
 }}
{\end{list}}
  \newtheorem{cor}[thm]{Corollary}
 \newenvironment{algorithmicwrapper}{
   \begin{algorithmic}[\numberedlinesswitch]
 }{
   \end{algorithmic}
 }
 \newenvironment{inputsenv}{\INPUTS}{\ENDINPUTS}
 \newenvironment{outputsenv}{\OUTPUTS}{\ENDOUTPUTS}
 \newenvironment{bodyenv}{\BODY}{\ENDBODY}
 \newenvironment{whileenv}[1][condition?]{\WHILE{#1}}{\ENDWHILE}
 \newenvironment{ifenv}[1][condition?]{\IF{#1}}{\ENDIF}
 \newenvironment{forenv}[1][condition?]{\FOR{#1}}{\ENDFOR}
  \newtheorem{rem}[thm]{Remark}
  \newtheorem{lem}[thm]{Lemma}
  \newtheorem{fact}[thm]{Fact}
\definecolor{commentcolor}{rgb}{0.6, 0.6, 0.6}
\newcommand\sigvar{\ensuremath{\mathbf{F}}}
\newcommand\siglt{\ensuremath{\prec}}
\newcommand\siggt{\ensuremath{\succ}}
\newcommand\sigge{\ensuremath{\succeq}}
\newcommand\monset{\ensuremath{\mathbb{M}}}
\newcommand\sigset{\ensuremath{\mathbb{S}}}
\newcommand\monsigset{\ensuremath{\widehat{\mathbb{S}}}}
\newcommand\spol[2]{\ensuremath{S_{#1,#2}}}
\newcommand\sig{\ensuremath{\mathrm{sig}}}
\newcommand\minsig{\ensuremath{\mathcal{S}}}
\newcommand\nonminsig{(NM)}
\newcommand\rewritable{(RW)}
\newcommand\divH{(GM)}
\newcommand\suptopred{(GS)}
\newcommand\prisyz{(FM)}
\newcommand\faurew{(FR)}
\newcommand\faurewtoo{(FM')}
\newcommand\minarr{(AM)}
\newcommand\rewarr{(AR)}
\newcommand\rewggv{(GR')}
\newcommand\minmon{(MR)}
\newcommand\altsig{natural sig\-nature}
\newcommand\als{\textrm{sig}}
\newcommand\Syz{\ensuremath{\textit{Syz}}}
\newcommand\ggv{G$^2$V}
\begin{document}

\title{Signature-based Algorithms to Compute Gr\"obner Bases}

\author{Christian Eder}

\address{Universit\"at Kaiserslautern}

\email{ederc@mathematik.uni-kl.de}

\author{John Perry}

\address{University of Southern Mississippi}

\email{john.perry@usm.edu}

\keywords{Gr\"obner bases, F5 Algorithm, \ggv\ Algorithm}
\begin{abstract}
This paper describes a Buchberger-style algorithm to compute a Gr\"obner
basis of a polynomial ideal, allowing for a selection strategy based
on {}``signatures''. We explain how three recent algorithms can
be viewed as different strategies for the new algorithm, and how other
selection strategies can be formulated. We describe a fourth as an
example. We analyze the strategies both theoretically and empirically,
leading to some surprising results.
\end{abstract}
\maketitle

\section{Introduction}

A fundamental tool of symbolic and algebraic computation is the method
of Gr\"obner bases. The first algorithm to compute a Gr\"obner basis
was introduced by Buchberger in 1965~\cite{Buchberger65}; subsequently,
the computer algebra community has developed a number of additional
algorithms, such as~\cite{BrickSlimgb_Journal,Fau99,GM88,MMT92}.
In recent years, a new genus of algorithm has emerged, exemplified
by F5 and \ggv~\cite{Fau02Corrected,GGV10}.

While they are presented as different algorithms, both use a property
called a {}``signature'' to control the computation and reduction
of $S$-poly\-nomials. While studying the two, we realized that they
could be viewed as variations in the selection strategy of a basic
algorithm common to both. A third recent algorithm of Arri likewise
fits this mold~\cite{ArriPerry2009}, so it seemed instructive to
formulate explicitly both the underlying structure and the three algorithms
as strategies for implementation. This provides a common theoretical
framework which allows a careful comparison, looking both at how the
criteria employed by the strategies are related, and at experimental
timings in a unified environment. For the latter, we employed both
interpreted code (via Sage~\cite{sage} and \textsc{Singular}~\cite{Singular312})
and compiled code (via \textsc{Singular}). We examined both the original
implementations and new implementations of the algorithms as {}``plugins''
to the common algorithm. For consistency, the algorithms should all
compute a reduced Gr\"obner basis incrementally, so when the reader
sees {}``F5'', s/he should understand {}``F5C''~\cite{EderPerry09}.

Section~\ref{sec:Background} reviews basic notation and concepts,
adapting different sources which vary considerably in notation~\cite{ArriPerry2009,EderPerry09,Fau02Corrected,GGV10,Zobnin_F5Journal}.
Theoretical contributions, which include the new {}``\als-redundant
criterion'', begin in Section~\ref{sub: sig-based algs} and continue
into Section~\ref{sec: past algorithms as strategies}. Section~\ref{sec: Comparison}
gets to the meat of comparing the strategies; both the analysis of
Section~\ref{sub: theoretical comparison} and the timings of Section~\ref{sec:Experimental-results}
produce surprising and unexpected results.

\section{Background}

\label{sec:Background}Let $i\in\mathbb{N}$, $\field$ a field, and
$R=\pring$. Throughout this paper, $F_{i}=\left(f_{1},\ldots,f_{i}\right)$
where each $f_{j}\in R$, and $I_{i}=\ideal{F_{i}}$ is the ideal
of $R$ generated by the elements of $F_{i}$. Fix a degree-compatible
ordering $<$ on the monoid $\monset$ of monomials of $x_{1},\ldots,x_{n}$;
for any $p\in R$, we denote $p$'s leading monomial by $\lm\left(p\right)$,
its leading coefficient by $\lc\left(p\right)$, and write $\lt\left(p\right)=\lc\left(p\right)\lm\left(p\right)$.
For brevity, we may denote $t_{p}=\lm\left(p\right)$, $c_{p}=\lc\left(p\right)$,
and $\lcm\left(t_{p},t_{q}\right)=t_{p,q}$.

Let $f_{i+1}\in R\backslash I_{i}$. We want an algorithm that,\emph{
}given a Gr\"obner basis $G_{i}$ of $I_{i}$,\emph{ }computes a
Gr\"obner basis of $I_{i+1}=\ideal{F_{i+1}}$, where $F_{i+1}=\left(f_{1},\ldots,f_{i+1}\right)$.

\subsection{The traditional approach}

Given $p,q\in R$, the \textbf{$S$-poly\-nomial} of $p$ and $q$
is
\[
\spol{p}{q}=\frac{t_{p,q}}{t_{p}}\cdot p-\frac{c_{p}}{c_{q}}\cdot\frac{t_{p,q}}{t_{q}}\cdot q.
\]
(It makes some things easier later if we multiply only $q$ by a field
element.) Further, given $p,r\in R$ and $G\subset R$, we say $p$
\textbf{reduces to $r$ modulo} $G$ if there exist $\Lambda_{1},\ldots,\Lambda_{\ell}\in\mathbb{N}$,
$t_{1},\ldots,t_{\ell}\in\monset$, $c_{1},\ldots,c_{\ell}\in\field$,
and $r_{0},\ldots,r_{\ell}\in R$ such that
\begin{itemize}
\item $r_{0}=p$ and $r_{\ell}=r$; and
\item for all $i=1,\ldots,\ell$,

\begin{itemize}
\item $r_{i}=r_{i-1}-c_{i}t_{i}g_{\Lambda_{i}}$, and
\item $\lm\left(r_{i}\right)<\lm\left(r_{i-1}\right)$.
\end{itemize}
\end{itemize}
\noindent Buchberger's algorithm computes the Gr\"obner basis $G$
of $\ideal{F_{i+1}}$ by computing $S$-poly\-nomials and reducing
them modulo the current value of $G$: initially the $S$-poly\-nomials
of $f_{j},f_{i+1}$ where $j=1,\ldots,i$; then, for any $S$-poly\-nomial
that reduces to nonzero $r$, also for the pairs $r,g$ where $g\in G$,
adding $r$ to the basis after determining new pairs. Termination
is guaranteed by Dickson's Lemma, since no polynomial is added to
$G_{i+1}$ unless it expands the $\monset$-submodule $\ideal{\lm\left(G_{i+1}\right)}$
of the Noetherian $\monset$-monomodule $\monset$~\cite{KR00}.

The following is fundamental to the traditional approach.
\begin{defn}
Let $s\in R$ and $G\subset R$, with $\#G=\ell$. We say that $s$
has a \textbf{standard representation with respect to $G$} if there
exist $h_{1},\ldots,h_{\ell}\in R$ such that $s=h_{1}g_{1}+\cdots+h_{\ell}g_{\ell}$
and for each $k=1,\ldots,\ell$ either $h_{k}=0$ or $\lm\left(h_{k}\right)\lm\left(g_{k}\right)\leq\lm\left(s\right)$.
We may also say that $\left(h_{1},\ldots,h_{\ell}\right)$ is a standard
representation of $s$ with respect to $G$.
\end{defn}
If $s$ reduces to $r$ modulo $G$, then it has a standard representation
modulo $G$; the converse, however, is often false.

\subsection{Signature-based strategies}

\label{sub: sig-based algs}

All algorithms to compute a Gr\"obner basis follow the basic blueprint
of Buchberger's algorithm, but recent algorithms introduce a new point
of view. We call these {}``signature-based strategies'', inasmuch
as their computations take {}``signatures'' into account.
\begin{defn}
Let $\sigvar_{1},\ldots,\sigvar_{m}$ be the canonical generators
of the free $R$-module $R^{m}$. Let $p\in\ideal{F_{i+1}}$, $j\in\mathbb{N}$
with $j\leq i+1$, and $h_{1},\ldots,h_{j}\in R$ such that $h_{j}\neq0$
and
\[
p=h_{1}f_{1}+\cdots h_{j}f_{j}.
\]
If $c=\lc\left(h_{j}\right)$ and $\tau=\lm\left(h_{j}\right)$, we
say that $c\tau\sigvar_{j}$ is \textbf{a \altsig} of $p$. Let $\sigset$
be the set of all \altsig s:
\[
\sigset=\left\{ c\tau\sigvar_{j}:c\in\field\backslash\left\{ 0\right\} ,\tau\in\monset,j=1,\ldots,m\right\} .
\]
We extend the ordering $<$ on $\mathbb{M}$ to a partial ordering
$\siglt$ on $\sigset$ in the following way: $c\sigma\sigvar_{j}\siglt d\tau\sigvar_{k}$
iff $j<k$, or $j=k$ and $\sigma<\tau$. If $j=k$, $\sigma=\tau$,
and $c=d$, we say that $c\sigma\sigvar_{j}=d\tau\sigvar_{k}$. If
$j=k$ and $\sigma=\tau$, we say that $c\sigma\sigvar_{j}$ and $d\tau\sigvar_{k}$
are \textbf{level}. We do not otherwise compare them.\end{defn}
\begin{prop}
The ordering $\siglt$ is a well-ordering on $\monsigset=\left\{ \tau\sigvar_{j}:\tau\in\mathbb{M},j=1,\ldots,m\right\} $.
Thus, for each $p\in\ideal{F_{i}}$, we can identify a unique, minimal,
monic \altsig.
\end{prop}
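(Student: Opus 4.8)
The plan is to recognize $\siglt$ on $\monsigset$ as a copy of the lexicographic order on $\{1,\dots,m\}\times\monset$, with the first factor ordered as a finite chain and the second by $<$: the bijection $\tau\sigvar_{j}\mapsto(j,\tau)$ identifies $\monsigset$ with this product and carries $\siglt$ to the rule ``compare indices first, break ties with $<$''. I would then establish two things, (1) that $\siglt$ is a \emph{total} order on $\monsigset$, and (2) that it is \emph{well-founded}, i.e.\ every nonempty $S\subseteq\monsigset$ has a $\siglt$-least element; together these are exactly the assertion that $\siglt$ well-orders $\monsigset$, and the last sentence of the proposition then follows by applying this to the set of natural signatures of a fixed $p$.

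For (1): given $\sigma\sigvar_{j}$ and $\tau\sigvar_{k}$ in $\monsigset$, if $j\neq k$ they are comparable via $j<k$ or $k<j$; if $j=k$ then $\sigma$ and $\tau$ are comparable because $<$ is linear on $\monset$, and if in addition $\sigma=\tau$ the two elements coincide. (This is exactly the point of passing from $\sigset$ to $\monsigset$: on $\sigset$ the field coefficients leave $\siglt$ merely partial, whereas $\monsigset$ carries no coefficients.) For (2) the one input that deserves an argument is that $<$ itself well-orders $\monset$; this follows from degree-compatibility, since along any strictly $<$-descending chain the degrees are weakly decreasing --- an increase in degree would force an increase in the order --- hence eventually constant, and there are only finitely many monomials of any fixed degree, so no such chain can be infinite. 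Granting that, take $\emptyset\neq S\subseteq\monsigset$: the set of indices $j$ appearing in $S$ is a nonempty subset of the finite chain $\{1,\dots,m\}$ and so has a least element $j_{0}$; then $\{\tau\in\monset:\tau\sigvar_{j_{0}}\in S\}$ is a nonempty subset of $\monset$, hence has a $<$-least element $\tau_{0}$; by construction $\tau_{0}\sigvar_{j_{0}}$ is the $\siglt$-least element of $S$.

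For the concluding ``thus'': when $p\in\ideal{F_{i}}$ is nonzero, any expression $p=h_{1}f_{1}+\cdots+h_{i}f_{i}$ has a nonzero coefficient, and discarding trailing zero coefficients yields an expression with $h_{j}\neq0$, so the set of natural signatures of $p$ is nonempty. Its image in $\monsigset$ under $c\tau\sigvar_{j}\mapsto\tau\sigvar_{j}$ is then a nonempty subset of $\monsigset$, which by the well-ordering just established has a unique $\siglt$-least element $\tau\sigvar_{j}$. Any two natural signatures of $p$ lying over $\tau\sigvar_{j}$ are level (they differ only by a nonzero field scalar), so rescaling the coefficient to $1$ singles out one well-defined element $\tau\sigvar_{j}$, the unique minimal monic natural signature of $p$.

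I do not expect a genuine obstacle here: the argument is essentially a chase through the definitions, and the ``lexicographic product of well-orders is a well-order'' fact is standard. The two places that warrant a sentence rather than silence are the well-ordering of $\monset$ by $<$ (used decisively in step (2), and where degree-compatibility, as opposed to an arbitrary linear order on $\monset$, is what is actually needed) and the small amount of bookkeeping around the word ``monic'' in the final clause --- namely that the minimization takes place in $\monsigset$ and it is precisely the normalization of the leading coefficient that removes the remaining ambiguity.
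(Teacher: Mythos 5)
The paper deliberately gives no proof of this proposition: immediately after it, the authors write that their ``Propositions'' are either trivial or proved elsewhere. There is therefore no argument of theirs to compare against, and the task is just to check that your unpacking of the ``trivial'' case is sound. It is. The identification of $\siglt$ on $\monsigset$ with the lexicographic order on $\{1,\dots,m\}\times\monset$ is exactly right, the product-of-well-orders argument is carried out correctly (finite chain on the index, well-order on $\monset$), and the closing paragraph correctly separates the three things being claimed at the end --- nonemptiness of the signature set, existence and uniqueness of the $\siglt$-least element of its image in $\monsigset$, and the role of the word \emph{monic} in removing the residual scalar ambiguity, since $\siglt$ is only a partial order on $\sigset$ itself.

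One small caveat on your side remark that degree-compatibility ``is what is actually needed'': it does give the pleasantly elementary descending-chain argument you wrote, but it is not strictly necessary. Any monomial order on $\monset$ (total, multiplicative, with $1$ least) is already a well-order by Dickson's Lemma, which the paper invokes elsewhere for termination. So the accurate statement is that degree-compatibility buys you a short self-contained proof of the well-ordering of $\monset$, not that the conclusion would fail without it. You might also flag, for completeness, that when $p=0$ the set of natural signatures can be empty (e.g.\ $i=1$ over a domain), so the final ``thus'' is really about nonzero $p$ --- which is the only case the algorithms ever use, and the case you in fact treat.
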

(Our {}``Propositions'' are either trivial or proved elsewhere.)
\begin{defn}
We call the unique minimal monic \altsig\  of $p\in R$ its \textbf{minimal
signature}. We denote the set of all \altsig s of $p$ by $\sig\left(p\right)$,
and the minimal signature of $p$ by $\minsig\left(p\right)$.\end{defn}
\begin{prop}
\label{pro: natural sigs}Let $p,q\in I_{i+1}$. Assume that $c\sigma\sigvar_{j}\in\sig\left(p\right)$
and $d\tau\sigvar_{k}\in\sig\left(q\right)$. Let $t\in\monset$.
We have
\begin{lyxlist}{(M)}
\item [{(A)}] $c\sigma\sigvar_{j}\in\sig\left(p\pm q\right)$ if $c\sigma\sigvar_{j}\siggt d\tau\sigvar_{k}$;
\item [{(B)}] $\left(c\pm d\right)\sigma\sigvar_{j}\in\sig\left(p\pm q\right)$
if $c\sigma\sigvar_{j}$ and $d\tau\sigvar_{k}$ are level and $c\pm d\neq0$;
and
\item [{(C)}] $ct\sigma\sigvar_{j}\in\sig\left(tp\right)$.
\end{lyxlist}
\end{prop}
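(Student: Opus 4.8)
The plan is to unwind what it means for $c\sigma\sigvar_{j}$ and $d\tau\sigvar_{k}$ to be \altsig s of $p$ and $q$ into explicit module representations, combine those representations termwise, and then read off the leading data of the top ($j$-th) component. Fix $g_{1},\ldots,g_{j}\in R$ with $g_{j}\neq0$, $p=g_{1}f_{1}+\cdots+g_{j}f_{j}$, $\lc\left(g_{j}\right)=c$, and $\lm\left(g_{j}\right)=\sigma$; similarly fix $h_{1},\ldots,h_{k}\in R$ witnessing $d\tau\sigvar_{k}\in\sig\left(q\right)$. Padding the shorter list with zero coefficients, we may assume both lists have the same length $\max\left(j,k\right)\leq i+1$, so that $p\pm q=\sum_{\ell}\left(g_{\ell}\pm h_{\ell}\right)f_{\ell}$ and $tp=\sum_{\ell}\left(tg_{\ell}\right)f_{\ell}$ are valid representations; here the two signs in $p\pm q$ just amount to replacing each $h_{\ell}$ by $\pm h_{\ell}$, so it suffices to treat one of them.

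For (C), the top coefficient in the displayed representation of $tp$ is $tg_{j}$. Since $t$ is a monomial and $g_{j}\neq0$, we have $tg_{j}\neq0$ with $\lm\left(tg_{j}\right)=t\sigma$ and $\lc\left(tg_{j}\right)=c$, so $ct\sigma\sigvar_{j}\in\sig\left(tp\right)$ directly.

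For (A), the hypothesis $c\sigma\sigvar_{j}\siggt d\tau\sigvar_{k}$ splits into two subcases. If $j>k$, then after padding $h_{j}=0$, so the top coefficient of $p\pm q$ is exactly $g_{j}$ and $c\sigma\sigvar_{j}\in\sig\left(p\pm q\right)$ immediately. If $j=k$ and $\sigma>\tau$, then $\lm\left(h_{j}\right)=\tau<\sigma=\lm\left(g_{j}\right)$, so the coefficient of $\sigma$ in $g_{j}\pm h_{j}$ is $c$ while every other monomial appearing in $g_{j}\pm h_{j}$ is $<\sigma$; hence $g_{j}\pm h_{j}\neq0$, $\lm\left(g_{j}\pm h_{j}\right)=\sigma$, and $\lc\left(g_{j}\pm h_{j}\right)=c$, giving $c\sigma\sigvar_{j}\in\sig\left(p\pm q\right)$. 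Part (B) is the same computation under the level hypothesis $j=k$, $\sigma=\tau$: then $g_{j}\pm h_{j}$ has $\sigma$-coefficient $c\pm d$ with all other monomials $<\sigma$, so the assumption $c\pm d\neq0$ is exactly what guarantees $g_{j}\pm h_{j}\neq0$ with leading term $\left(c\pm d\right)\sigma$, whence $\left(c\pm d\right)\sigma\sigvar_{j}\in\sig\left(p\pm q\right)$.

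The only thing that can go wrong is cancellation in the top component, which is precisely why (A) and (B) each carry a hypothesis ($\sigma$ strictly dominating $\tau$, or $c\pm d\neq0$) that rules out the degenerate case where the $j$-th coefficient vanishes; (C) needs no such hypothesis because multiplication by a nonzero monomial is injective on $R$. So the main — and admittedly minor — obstacle is just keeping the bookkeeping of the padding and of the two subcases of $\siggt$ straight; once the representations are lined up, every assertion is immediate from the definitions of $\lm$, $\lc$, and $\siglt$.
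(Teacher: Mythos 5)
Your argument is correct: you unwind the definition of \altsig\ into explicit representations, pad to a common length, combine termwise, and then read off the top component, with the hypotheses in (A) and (B) exactly excluding cancellation of the $j$-th coefficient. The paper itself declares this proposition trivial and offers no proof, so there is no alternate argument to compare against; your proof is the straightforward one a reader would supply.
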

\begin{cor}
\label{cor: sig-safe red preserves sig}Let $p,q\in I_{i+1}$. Assume
that $c\sigma\sigvar_{j}\in\sig\left(p\right)$ and $d\tau\sigvar_{k}\in\sig\left(q\right)$.
Suppose that there exist $a\in\field$ and $t\in\monset$ such that
$at\lt\left(q\right)=\lt\left(p\right)$.
\begin{lyxlist}{(M)}
\item [{(A)}] If $t\tau\sigvar_{k}\siglt\sigma\sigvar_{j}$, then $c\sigma\sigvar_{j}\in\sig\left(p-atq\right)$.
\item [{(B)}] If $t\tau\sigvar_{k}$, $\sigma\sigvar_{j}$ are level and
$ad\neq c$, then $\left(c-ad\right)\sigma\sigvar_{j}\in\sig\left(p-atq\right)$.
\end{lyxlist}
\end{cor}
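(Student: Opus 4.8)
The plan is to deduce both parts directly from Proposition~\ref{pro: natural sigs}, applied to the pair of polynomials $p$ and $atq$. The first step is to exhibit a \altsig\ of $atq$. Since $\sig(p)\neq\emptyset$ we have $p\neq0$, so the relation $at\lt(q)=\lt(p)$ forces $a\neq0$ and $q\neq0$. Writing $q=h_{1}f_{1}+\cdots+h_{k}f_{k}$ with $h_{k}\neq0$, $\lc(h_{k})=d$, and $\lm(h_{k})=\tau$, we get $atq=(ath_{1})f_{1}+\cdots+(ath_{k})f_{k}$ with $ath_{k}\neq0$, $\lc(ath_{k})=ad$, and $\lm(ath_{k})=t\tau$; hence $ad\,t\tau\,\sigvar_{k}\in\sig(atq)$. (Equivalently, apply part~(C) of Proposition~\ref{pro: natural sigs} to obtain $dt\tau\sigvar_{k}\in\sig(tq)$, then scale by the nonzero field element $a$.) Since also $atq\in I_{i+1}$, the hypotheses of Proposition~\ref{pro: natural sigs} are met for $p$ and $atq$.

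For part~(A): because $\siglt$ compares only module indices and monomials, not coefficients, the hypothesis $t\tau\sigvar_{k}\siglt\sigma\sigvar_{j}$ is equivalent to $c\sigma\sigvar_{j}\siggt ad\,t\tau\,\sigvar_{k}$. Part~(A) of Proposition~\ref{pro: natural sigs}, applied to $p$ and $atq$, then gives $c\sigma\sigvar_{j}\in\sig(p-atq)$, which is the claim. For part~(B): if $t\tau\sigvar_{k}$ and $\sigma\sigvar_{j}$ are level, then $k=j$ and $t\tau=\sigma$, so $ad\,t\tau\,\sigvar_{k}$ and $c\sigma\sigvar_{j}$ are level as well; since $c-ad\neq0$ by hypothesis, part~(B) of Proposition~\ref{pro: natural sigs} applied to $p$ and $atq$ gives $(c-ad)\sigma\sigvar_{j}\in\sig(p-atq)$.

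There is no serious obstacle. The only points requiring attention are that $a\neq0$, so that $ad\,t\tau\,\sigvar_{k}$ really is an element of $\sigset$ (a \altsig\ requires a nonzero coefficient), and that both the order $\siglt$ and the {}``level'' relation disregard field coefficients, so the comparison and level hypotheses stated for $t\tau\sigvar_{k}$ versus $\sigma\sigvar_{j}$ transfer unchanged to $ad\,t\tau\,\sigvar_{k}$ versus $c\sigma\sigvar_{j}$. Both are immediate from the definitions.
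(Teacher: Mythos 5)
The paper gives no proof of this Corollary, treating it as immediate from Proposition~\ref{pro: natural sigs}, and your route --- extract $ad\,t\tau\sigvar_{k}\in\sig(atq)$ via part~(C) plus scaling, then apply parts~(A) and~(B) of the Proposition to $p$ and $atq$ --- is exactly the intended one. One small quibble: $\sig(p)\neq\emptyset$ does not by itself force $p\neq 0$ (the zero polynomial acquires \altsig s from syzygies), but this is harmless since $p\neq 0$, and hence $a\neq 0$, already follow from the stated hypothesis that $at\lt(q)=\lt(p)$.
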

\begin{defn}
\label{def: sig-safe reduction}If $\left(c\sigma\sigvar_{j},p\right),\left(d\tau\sigvar_{k},q\right)\in\sigset\times I_{i+1}$,
$a\in\field$, and $t\in\monset$ satisfy (A) or (B) of Corollary~\ref{cor: sig-safe red preserves sig},
we say that $p-atq$ is a \textbf{$\sigma$-reduction} \textbf{of
$p$ with respect to $q$}. Otherwise, $p-atq$ is \textbf{$\sigma$-unsafe}.
When it is clear from context that we mean $\sigma$-reduction for
appropriate $\sigma$, we refer simply to reduction.

We define a \textbf{$\sigma$-reduction of $p$ modulo $G$} analogously,
and say that it is \textbf{complete} when no reductions of the type
described in Corollary~\ref{cor: sig-safe red preserves sig} are
possible. It is \textbf{semi-complete} when reductions of type (B)
can be performed, but not reductions of type (A).
\end{defn}
We now adapt the notion of a standard representation to consider \altsig s.
\begin{defn}
\label{def: sig-representations}Let $G\subset\sigset\times I_{i+1}$
with $\#G=\ell$, and suppose that for each $\left(d\tau\sigvar_{j},g\right)\in G$
we have $\tau\sigvar_{j}=\minsig\left(g\right)$. We say that any
$\left(c\sigma\sigvar_{i+1},s\right)\in\sigset\times I_{i+1}$ has
a \textbf{standard representation with respect to $G$} (or \textbf{\als-representation},
or \textbf{$\sigma$-representation} for short) if $c\sigma\sigvar_{i+1}\in\sig\left(s\right)$
and there exist $h_{1},\ldots,h_{\ell}\in R$ such that $\left(h_{1},\ldots,h_{\ell}\right)$
is a standard representation of $s$ and each $h_{k}=0$ or $\sigma\sigvar_{i+1}\sigge\lm\left(h_{k}\right)\minsig\left(g_{k}\right)$.
\end{defn}
Just as a reduction of $p$ to zero modulo $G$ corresponds to a $\lm\left(p\right)$-representation
of $p$ with respect to $G$ in the traditional case, a $\sigma$-reduction
to zero modulo $G$ corresponds to a $\sigma$-representation with
respect to $G$.
\begin{thm}
\label{thm: asc sig =00003D> min sig}Let $c_{1}$, \ldots{}, $c_{\ell}\in\field$,
$\tau_{1},\ldots,\tau_{\ell}\in\monset$, and $p_{1},\ldots,p_{\ell}\in I_{i+1}\backslash\left\{ 0\right\} $
such that
\begin{align*}
G & =\left\{ \left(\sigma_{1}\sigvar_{j_{1}},g_{1}\right),\ldots,\left(\sigma_{M}\sigvar_{j_{M}},g_{M}\right)\right\} \\
 & \phantom{=}\cup\left\{ \left(\sigvar_{i+1},f_{i+1}\right),\left(d_{1}\tau_{1}\sigvar_{i+1},p_{1}\right),\ldots,\left(d_{\ell}\tau_{\ell}\sigvar_{i+1},p_{\ell}\right)\right\} ,
\end{align*}
$\sigma\sigvar_{j}=\minsig\left(g\right)$ for all $\left(c\sigma\sigvar_{j},g\right)\in G$,
and for all $\left(c\sigma\sigvar_{i+1},p\right)$, $\left(d\tau\sigvar_{j},q\right)\in G$
one of the following holds:
\begin{itemize}
\item $\left(\frac{t_{p,q}}{t_{p}}\cdot\sigma\right)\sigvar_{i+1}=\left(\frac{t_{p,q}}{t_{q}}\cdot\tau\right)\sigvar_{j}$,
or
\item $\left(\frac{t_{p,q}}{t_{p}}\cdot\sigma\right)\sigvar_{i+1}\siggt\left(\frac{t_{p,q}}{t_{q}}\cdot\tau\right)\sigvar_{j}$
and $\left(\frac{t_{p,q}}{t_{p}}\cdot\sigma\sigvar_{i+1},\spol{p}{q}\right)$
has a standard represen\-tation with respect to $G$.
\end{itemize}
Then $\widehat{G}=\left\{ g:\exists\left(\sigma\sigvar_{j},g\right)\in G\right\} $
is a Gr\"obner basis of $I_{i+1}$.\end{thm}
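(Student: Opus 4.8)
The plan is to verify the classical characterization of a Gr\"obner basis. Since $\widehat{G}$ contains $f_{i+1}$ together with (reading the pairs of $G$ of signature index at most $i$ as carrying the given Gr\"obner basis $G_{i}$ of $I_{i}$) a generating set of $I_{i}$, it generates $I_{i+1}$, so $\widehat{G}$ is a Gr\"obner basis of $I_{i+1}$ precisely when every $p\in I_{i+1}$ has a standard representation with respect to $\widehat{G}$. I would prove, by induction on $\minsig\left(p\right)$ --- legitimate because $\siglt$ well-orders $\monsigset$ --- with $\lm\left(p\right)$ as a secondary parameter, the reinforced assertion that every nonzero $p\in I_{i+1}$ has a standard representation that is moreover bounded by $\minsig\left(p\right)$ in the sense of Definition~\ref{def: sig-representations}. (This reinforcement is both what drives the induction and the form in which the theorem must be reapplied, so one assumes it of $G_{i}$ as well; for the sake of the degenerate cases one may also assume all elements of $\widehat{G}$ monic.) If $\minsig\left(p\right)$ has index at most $i$ then $p\in I_{i}$ and $G_{i}$ already supplies the required representation, so assume $\minsig\left(p\right)=\sigma\sigvar_{i+1}$ with $\sigma\in\monset$.

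Take a representation of $p$ realizing $\minsig\left(p\right)$, say with $\sigvar_{i+1}$-cofactor whose leading term is $c\sigma$, and set $p'=p-c\sigma f_{i+1}$. Then the $\sigvar_{i+1}$-component of that representation falls below $\sigma$, so $\minsig\left(p'\right)\siglt\sigma\sigvar_{i+1}$ (if $p'=0$ then $p=c\sigma f_{i+1}$ is itself standard and we are done, so take $p'\neq0$). By the induction hypothesis $p'$ has a standard representation $p'=\sum_{k}h_{k}g_{k}$ bounded by $\minsig\left(p'\right)$, hence
\[
p=c\sigma f_{i+1}+\sum_{k}h_{k}g_{k}
\]
is a representation of $p$ that is bounded \emph{in signature} by $\sigma\sigvar_{i+1}$. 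Let $T$ be the largest among $\sigma\lm\left(f_{i+1}\right)$ and the $\lm\left(h_{k}\right)\lm\left(g_{k}\right)$. If $T=\lm\left(p\right)$, this representation is standard and we are done; $T<\lm\left(p\right)$ cannot occur, since $\lm\left(p\right)\leq T$.

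The case $T>\lm\left(p\right)$ is the crux. Here the terms of the expansion at level $T$ must cancel, so by the cofactor-difference rewriting underlying Buchberger's criterion the level-$T$ part can be written as a $\monset$-combination of $S$-polynomials $\spol{u}{u'}$ among the generators $u,u'\in\widehat{G}$ (one of them $f_{i+1}$) appearing at level $T$, each scaled by $\tfrac{T}{\lcm\left(\lm\left(u\right),\lm\left(u'\right)\right)}$ --- its signature scaling accordingly, by Proposition~\ref{pro: natural sigs}(C) --- so that its leading monomial is below $T$. For each such pair, orient it so that a component of index $i+1$ is first and invoke the hypothesis (for pairs whose two components both have index at most $i$, use instead that $G_{i}$ is a Gr\"obner basis of $I_{i}$): either the two half-signatures of $\spol{u}{u'}$ coincide, in which case --- the polynomials being monic --- $\spol{u}{u'}$ has minimal signature \emph{strictly} below its common half-signature, and the induction hypothesis (applicable since, by the standing signature bound, scaling keeps this below $\sigma\sigvar_{i+1}$) provides a standard representation of the scaled $S$-polynomial with all terms below $T$ and signatures $\sigle\sigma\sigvar_{i+1}$; or the half-signatures do not coincide, in which case the hypothesis itself furnishes a $\sigma$-representation of $\spol{u}{u'}$, bounded by the larger half-signature, from which the same scaling again reads off terms below $T$ and signatures $\sigle\sigma\sigvar_{i+1}$. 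Substituting all of these leaves a representation of $p$, still signature-bounded by $\sigma\sigvar_{i+1}$, whose top level lies strictly below $T$; since a term order well-orders $\monset$, iterating reaches top level $\lm\left(p\right)$ after finitely many rounds, producing the desired $\minsig\left(p\right)$-bounded standard representation of $p$ and contradicting its minimal choice.

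The step I expect to fight with is the last paragraph, namely making the interplay between the Buchberger-style descent on leading monomials and the signature arithmetic precise: one must confirm that the signature bound $\sigle\sigma\sigvar_{i+1}$ survives every substitution (so the hypothesis and the induction keep applying, with the index-$(i+1)$ polynomial in the first slot), and must handle carefully the case where the two half-signatures of an $S$-polynomial coincide --- where a strict drop in signature, and hence a clean use of the induction hypothesis, depends on the $c_{p}/c_{q}$-normalization in the definition of $\spol{p}{q}$ and on working with monic representatives. The remaining ingredients --- the classical Gr\"obner-basis characterization and the case $T\leq\lm\left(p\right)$ --- are routine.
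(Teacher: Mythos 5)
Your proposal is correct in substance, but it is a genuinely different argument from the one in the paper. The paper restricts attention to $S$-poly\-nomials via the Buchberger characterization, dispatches the non-level pairs immediately from the hypothesis, and then handles the level pairs by induction on the level \altsig: it forms the signature-cancelling combination $s=\frac{t_{p,q}}{t_p}p-\frac{c}{d}\frac{t_{p,q}}{t_q}q$, notes that top-cancellations in a minimal-signature representation of $s$ are of strictly smaller \altsig, and then --- if $\lm\left(s\right)$ is still $t_{p,q}$ --- finds a reductor $g$ with smaller signature at $t_{p,q}$ and applies the identity $\spol{p}{q}=\frac{t_{p,q}}{t_{p,g}}\spol{p}{g}+\frac{t_{p,q}}{t_{g,q}}\spol{g}{q}$ to split off two $S$-polynomials that the hypothesis already covers. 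Your route instead establishes the strictly stronger global statement that every nonzero $p\in I_{i+1}$ admits a standard representation bounded in signature by $\minsig\left(p\right)$, by a double descent: outer induction on $\minsig\left(p\right)$ (peel off $c\sigma f_{i+1}$, apply the inductive hypothesis to $p'=p-c\sigma f_{i+1}$ of smaller signature), inner well-founded iteration on the top level $T$ of the representation via the cofactor-difference rewriting of level-$T$ cancellations, invoking the theorem's hypothesis for non-level $S$-poly\-nomials and the inductive hypothesis for level ones. Your approach is closer in spirit to the classical proof of Buchberger's theorem and has the pedagogical advantage of making the {}``every element has a $\als$-representation'' reformulation explicit, at the cost of requiring two nested descents and more bookkeeping (monic normalization so level signature-tops actually cancel, signature bounds surviving each substitution, the $\monset'$-style check that $T$ strictly decreases); the paper's argument is tighter because it only ever touches $S$-poly\-nomials and reduces the level case to a single rewriting along the triangle identity. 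One small caution in your write-up: you invoke a strengthened assumption on $G_{i}$ (that it provides not just standard but $\als$-bounded representations), which is not literally a hypothesis of the theorem as stated --- but as you implicitly note, it is automatic here, since any standard representation of an element of $I_{i}$ over $\widehat{G_{i}}$ involves only signatures of index at most $i$, which are vacuously $\siglt\sigma\sigvar_{i+1}$; it would be worth saying that explicitly rather than appealing to the incremental structure of the algorithm.
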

\begin{proof}
Recall from~\cite{BWK93} that $\widehat{G}$ is a Gr\"obner basis
of $I_{i+1}$ iff $\ideal{\widehat{G}}=I_{i+1}$ and $\spol{p}{q}$
has a standard representation with respect to $\widehat{G}$ for all
distinct $p,q\in\widehat{G}$. Since $G_{i}$ is a Gr\"obner basis
of $I_{i}$, we know that $\spol{p}{q}$ has a standard representation
for every $p,q\in G_{i}$, so it suffices to check only the pairs
$p,q\in\widehat{G}$ such that at least $p\not\in G_{i}$. For any
such pair where $\left(\frac{t_{p,q}}{t_{p}}\cdot\sigma\right)\sigvar_{i+1}\siggt\left(\frac{t_{p,q}}{t_{q}}\cdot\tau\right)\sigvar_{j}$,
by hypothesis $\left(\frac{t_{p,q}}{t_{p}}\cdot\sigma\sigvar_{i+1},\spol{p}{q}\right)$
has a standard representation with respect to $G$; by definition,
$\spol{p}{q}$ also has a standard representation with respect to
$\widehat{G}$.

Order the remaining $S$-poly\-nomials by ascending level signature,
and choose one such $S$-poly\-nomial $\spol{p}{q}$ such that $\left(c\sigma\sigvar_{i+1},p\right)$
and $\left(d\tau\sigvar_{i+1},q\right)$ are in $G$  and $\left(\frac{t_{p,q}}{t_{p}}\cdot\sigma\right)\sigvar_{i+1}=\left(\frac{t_{p,q}}{t_{q}}\cdot\tau\right)\sigvar_{i+1}$
is minimally level. Now, $s=\frac{t_{p,q}}{t_{p}}\cdot p-\frac{c}{d}\cdot\frac{t_{p,q}}{t_{q}}\cdot q$
has signature less than $\left(\frac{t_{p,q}}{t_{p}}\cdot\sigma\right)\sigvar_{i+1}$;
let $h_{1},\ldots,h_{j}\in R$ such that $s=\sum_{k=1}^{j}h_{k}f_{k}$
and $\lm\left(h_{j}\right)\sigvar_{j}=\minsig\left(s\right)$. Each
$\lm\left(h_{k}\right)\lm\left(f_{k}\right)\sigvar_{k}$ is smaller
than $\left(\frac{t_{p,q}}{t_{p}}\cdot\sigma\right)\sigvar_{i+1}$,
the mimimal level signature. By hypothesis, $S$-poly\-nomials corresponding
to top-cancellations among these $\lm\left(h_{k}\right)\lm\left(f_{k}\right)$
have \als-representations with respect to $G$. Thus, there exist
$H_{1},\ldots,H_{\#G}$ such that $s=\sum H_{k}g_{k}$, and for each
$k$, $H_{k}=0$ or $\lm\left(H_{k}\right)\lm\left(g_{k}\right)\leq\lm\left(s\right)$
and $\minsig\left(s\right)\sigge\lm\left(H_{k}\right)\minsig\left(g_{k}\right)$.
If $s=\spol{p}{q}$, we are done. Otherwise, $\lm\left(s\right)=t_{p,q}$,
implying $\lm\left(H_{\ell}\right)\lm\left(g_{\ell}\right)=t_{p,q}$
for some $\ell$. Thus, there exist $u\in\monset$ and $\left(\mu\sigvar_{j},g\right)\in G$
such that $u\lm\left(g\right)=t_{p,q}$, and $\left(u\mu\right)\sigvar_{j}\siglt\left(\frac{t_{p,q}}{t_{p}}\cdot\sigma\right)\sigvar_{i+1}$.
Since $\left(u\mu\right)\sigvar_{j}\siglt\left(\frac{t_{p,q}}{t_{p}}\cdot\sigma\right)\sigvar_{i+1}$,
by hypothesis $\spol{p}{g}$ and $\spol{g}{q}$ have \als-repre\-sen\-tations
with respect to $G$; since $\spol{p}{q}=\frac{t_{p,q}}{t_{p,g}}\spol{p}{g}+\frac{t_{p,q}}{t_{g,q}}\spol{g}{q}$,
so does $\spol{p}{q}$.

For the remaining $S$-poly\-nomials, proceed similarly by ascending
level signature. Top-cancellations of level signature will be smaller
than the working signature, and so will have been considered already.
\end{proof}
We need one more concept. It looks innocent, but is quite powerful.
We have not seen it elsewhere in the literature, but it is inspired
by other work; see Lemma~\ref{lem: criteria of arri, ggv suffice}.
\begin{defn}
\label{def: sig-redundant}Let $G\subset\sigset\times R$ and $\left(c\sigma\sigvar_{i+1},f\right)\in\sigset\times R$.
If there exists $\left(d\tau\sigvar_{i+1},g\right)\in G$ such that
$\tau\mid\sigma$ and $\lm\left(g\right)\mid\lm\left(f\right)$, then
$\left(\sigma\sigvar_{i+1},f\right)$ is \textbf{signature-redundant}
to $G$, or \textbf{\als-redundant} for short.
\end{defn}
The proof of Theorem~\ref{thm: asc sig =00003D> min sig} uses a
technique common to sig\-nature-based algorithms: proceed from smaller
to larger signature, reusing previous work to rewrite $S$-poly\-nomials.
This suggests an algorithm to compute a Gr\"obner basis; see Algorithm~\ref{alg: sig-based GB computation}.
\begin{algorithm}[t]
\caption{\label{alg: sig-based GB computation}Signature-based Gr\"obner basis
computation}

\begin{algorithmicwrapper}

\begin{inputsenv}

\STATE{$F_{i}\subset R$, such that $F_{i}$ is a Gr\"obner basis of $\ideal{F_{i}}$
and $f_{j}\not\in\ideal{f_{1},\ldots,f_{j-1}}$}

\STATE{$f_{i+1}\in R\backslash\ideal{F_{i}}$}

\end{inputsenv}
\begin{outputsenv}

\STATE{$G\subset\sigset\times R$ satisfying the hypothesis of Theorem \ref{thm: asc sig =00003D> min sig}}

\end{outputsenv}
\begin{bodyenv}

\STATE{Let $G=\left(\left(\sigvar_{1},f_{1}\right),\ldots,\left(\sigvar_{i},f_{i}\right),\left(\sigvar_{i+1},f_{i+1}\right)\right)$}

\STATE{\label{line: create initial P}Let $P=\left\{ \left(\frac{t_{f_{i+1},f}}{t_{f_{i+1}}}\sigvar_{i+1},f_{i+1},f\right):f\in F_{i}\right\} $}

\STATE{\label{line: init Syz}Initialize $\Syz$ \COMMENT{TBD}}
\begin{whileenv}[$P\neq\emptyset$\label{line: loop P}]

\STATE{\label{line: prune P}Prune $P$ using $\Syz$ \COMMENT{TBD}}

\STATE{\label{line: create S}Let $S=\left\{ \left(\sigma\sigvar_{i+1},p,q\right)\in P:\deg\sigma\mbox{ minimal}\right\} $}

\STATE{Let $P=P\backslash S$}
\begin{whileenv}[$S\neq\emptyset$\label{line: loop S}]

\STATE{\label{line: prune S}Prune $S$ using $\Syz,G$ \COMMENT{TBD}}

\STATE{\label{line: pick spoly}Let $\left(\sigma\sigvar_{i+1},p,q\right)\in S$
such that $\sigma\sigvar_{i+1}$ is minimal}

\STATE{Remove $\left(\sigma\sigvar_{i+1},p,q\right)$ from $S$}

\STATE{\label{line: sig-safe red of spoly}Let $\left(\sigma\sigvar_{i+1},r\right)$
be a semi-complete $\sigma$-reduction of $\spol{p}{q}$ mod $G$}

\STATE{\label{line: update Syz}Update $\Syz$ using $\left(\sigma\sigvar_{i+1},r\right)$
\COMMENT{TBD}}
\begin{ifenv}[$r\neq0$ and $\left(\sigma\sigvar_{i+1},r\right)$ not \als-redundant
to $G$\label{line: need to create crit pairs?}]

\begin{forenv}[$\left(\tau\sigvar_{i+1},g\right)\in G$ such that $g\neq0$ and $g$
not \als-redundant\label{line: create pairs w/nonzero polys}]

\begin{ifenv}[$\frac{t_{r,g}}{t_{r}}\cdot\sigma\neq\frac{t_{r,g}}{t_{g}}\cdot\tau$\label{line: don't create pairs for cancelling ves}]

\begin{ifenv}[$\frac{t_{r,g}}{t_{r}}\cdot\sigma>\frac{t_{r,g}}{t_{g}}\cdot\tau$\label{line: identify new vestige}]

\STATE{Let $\left(\mu\sigvar_{i+1},p,q\right)=\left(\frac{t_{r,g}}{t_{r}}\cdot\sigma\sigvar_{i+1},r,g\right)$}

\ELSE{}

\STATE{Let $\left(\mu\sigvar_{i+1},p,q\right)=\left(\frac{t_{r,g}}{t_{g}}\cdot\tau\sigvar_{i+1},g,r\right)$}

\end{ifenv}

\begin{ifenv}[$\deg\mu=d$]

\STATE{Add $\left(\mu\sigvar_{i+1},p,q\right)$ to $S$\label{line: higher-sig reduction}}

\ELSE{}

\STATE{Add $\left(\mu\sigvar_{i+1},p,q\right)$ to $P$\label{line: new crit pair}}

\end{ifenv}
\end{ifenv}
\end{forenv}
\end{ifenv}

\STATE{Append $\left(\sigma\sigvar_{i+1},r\right)$ to $G$}

\end{whileenv}
\end{whileenv}

\RETURN{$\left\{ \left(\sigma\sigvar_{i+1},g\right)\in G:g\neq0,\mbox{ not }\als\mbox{-redundant}\right\} $\label{line: return}}\end{bodyenv}
\end{algorithmicwrapper}
\end{algorithm}
 It follows the basic outline of Buchberger's algorithm, with several
exceptions.
\begin{itemize}
\item As we will show in Lemma~\ref{lem: sig(s) is signature unless sig-safe rep},
Algorithm~\ref{alg: sig-based GB computation} prepends a \altsig\
to each critical {}``pair'', and considers pairs by ascending \altsig\ 
(as in \cite{AlbrechtPerryF45,ArriPerry2009,GGV10,Zobnin_F5Journal}),
rather than by ascending lcm (as in \cite{Buchberger85,EderPerry09}).
\item Only $\sigma$-reductions of $\left(\sigma\sigvar_{i+1},r\right)$
are computed directly. We require semi-complete reduction, but complete
reduction implies this. If we perform a complete reduction and conclude
with $\left(c\sigma\sigvar_{i+1},r\right)$, we multiply $r$ by $c^{-1}$
to ensure $\sigma\sigvar_{i+1}\in\sig\left(r\right)$ for line~\ref{line: sig-safe red of spoly}.
Reductions that are $\sigma$-unsafe occur in line~\ref{line: higher-sig reduction}
via the generation of new critical pairs. Algorithm~\ref{alg: sig-based GB computation}
adds these to $S$ rather than $P$ to preserve the strategy of ascending
signature.
\item The \algorithmicif\ statement of line~\ref{line: need to create crit pairs?}
rejects not only zero polynomials, but \als-redundant polynomials
as well. This has a double effect in Lemma~\ref{lem: sig-cancelling sig implies drop}
and Theorem~\ref{thm: sig-based alg terms correctly}.
\item We adopted the following from F5, partly to illuminate the relationship
with this algorithm better. Algorithm~\ref{alg: sig-based GB computation}
is easily reformulated without them, in which case it begins to resemble
\ggv\ and Arri's algorithm.

\begin{itemize}
\item Critical pairs are oriented: any $\left(\sigma\sigvar_{i+1},p,q\right)\in P$
corresponds to $\spol{p}{q}=up-cvq$ where $\left(\tau\sigvar_{i+1},p\right)$,
$\left(\mu\sigvar_{j},q\right)\in G$ and $\sigma=u\tau\siggt v\mu$.
\item Line~\ref{line: create S} selects all pairs of minimal degree of
\altsig. With homogeneous polynomials and a degree-compatible ordering,
this selects all $S$-poly\-nomials of minimal degree. An inner loop
processes these by ascending signature.
\item Zero and \als-redundant polynomials are retained in the basis for
reasons that become clear later; however, line~\ref{line: create pairs w/nonzero polys}
prevents them from being used to compute new critical pairs, and line~\ref{line: return}
does not add them to the output.
\end{itemize}
\end{itemize}
\begin{rem}
For now, we define lines~\ref{line: init Syz},~\ref{line: prune P},~\ref{line: prune S},
and~\ref{line: update Syz} to do nothing, and discuss them in Section~\ref{sec: prune P, S}.
\end{rem}
We prove the correctness of Algorithm~\ref{alg: sig-based GB computation}
in several steps.
\begin{lem}
\label{lem: sig(s) is signature unless sig-safe rep}Suppose that
one of lines~\ref{line: create initial P},~\ref{line: higher-sig reduction},
or~\ref{line: new crit pair} creates~$\left(\sigma\sigvar_{i+1},p,q\right)$.
Write $s=\spol{p}{q}$; not only is $\sigma\sigvar_{i+1}\in\sig\left(s\right)$,
but $\sigma\sigvar_{i+1}=\minsig\left(s\right)$ unless $s$ already
has a \als-represen\-tation w.r.t.~$G$ when Algorithm~\ref{alg: sig-based GB computation}
would generate it.\end{lem}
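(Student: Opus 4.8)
The plan is to prove the two assertions separately, the first by direct appeal to Proposition~\ref{pro: natural sigs} and Corollary~\ref{cor: sig-safe red preserves sig}, the second by induction on the signature in the well-order $\siglt$, splitting on the index of $\minsig(s)$.

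\medskip\noindent\emph{That $\sigma\sigvar_{i+1}\in\sig(s)$.} I would handle the three creating lines in turn. For line~\ref{line: create initial P}, $s=\spol{f_{i+1}}{f_j}$ is $\frac{t_{f_{i+1},f}}{t_{f_{i+1}}}f_{i+1}$ minus a field multiple of $\frac{t_{f_{i+1},f}}{t_{f}}f_j$; since $\sigvar_{i+1}\in\sig(f_{i+1})$ and $\sigvar_j\in\sig(f_j)$ with $j<i+1$, Proposition~\ref{pro: natural sigs}(C) gives a natural signature of each summand and part~(A) (the summand built from $f_{i+1}$ dominates, its signature having strictly larger index) yields $\sigma\sigvar_{i+1}\in\sig(s)$. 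For lines~\ref{line: higher-sig reduction} and~\ref{line: new crit pair} the triple is $(\sigma\sigvar_{i+1},p,q)$ with $\{p,q\}=\{r,g\}$, where $r$ is produced in line~\ref{line: sig-safe red of spoly}: by Corollary~\ref{cor: sig-safe red preserves sig} a semi-complete $\als$-reduction preserves the signature, so (after the normalization noted just below Algorithm~\ref{alg: sig-based GB computation}) the signature of $r$ recorded in line~\ref{line: sig-safe red of spoly} lies in $\sig(r)$, and the signature $\tau\sigvar_{i+1}$ recorded with $g$ lies in $\sig(g)$. Line~\ref{line: don't create pairs for cancelling ves} forces the two candidate monomials to differ and line~\ref{line: identify new vestige} orients the triple toward the larger one, so again Proposition~\ref{pro: natural sigs}(C),(A) give $\sigma\sigvar_{i+1}\in\sig(s)$.

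\medskip\noindent\emph{Minimality: the easy branch.} Since $\siglt$ totally well-orders $\monsigset$ and $\sigma\sigvar_{i+1}$ may be taken monic, either $\sigma\sigvar_{i+1}=\minsig(s)$ or $\minsig(s)\siglt\sigma\sigvar_{i+1}$; assuming the latter I must produce a $\als$-representation of $(\sigma\sigvar_{i+1},s)$ with respect to the value of $G$ when the triple is created. Induct on $\sigma\sigvar_{i+1}$. Write $\minsig(s)=\nu\sigvar_m$. If $m\le i$ then $s\in I_i$ (this subsumes $s=0$, whose only standard representation is the empty one); because $F_i\subseteq\widehat G$ is already a Gr\"obner basis of $I_i$, $s$ reduces to $0$ modulo $F_i$, giving a standard representation $s=\sum_{k\le i}H_kf_k$ with each $\lm(H_k)\lm(f_k)\le\lm(s)$, and since every $\minsig(f_k)$ has index $\le i<i+1$ the bound $\sigma\sigvar_{i+1}\siggt\lm(H_k)\minsig(f_k)$ is automatic; pad with zero coefficients to get a $\als$-representation in the sense of Definition~\ref{def: sig-representations}.

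\medskip\noindent\emph{Minimality: the hard branch, and the main obstacle.} Now $m=i+1$, $\nu<\sigma$, $s\ne0$. Choose a representation $s=\sum_{k\le i+1}h_kf_k$ realizing $\minsig(s)$, so $\lm(h_{i+1})=\nu$; then $z:=s-h_{i+1}f_{i+1}\in I_i$, and reducing $z$ to $0$ modulo $F_i$ rewrites $s=h_{i+1}f_{i+1}+\sum_{k\le i}Z_kf_k$, where the $f_{i+1}$-term already respects the bound (its signature is $\nu\sigvar_{i+1}\siglt\sigma\sigvar_{i+1}$) and the index-$\le i$ terms do so automatically. If this is already a standard representation, done; otherwise its leading monomial exceeds $\lm(s)$ and I would eliminate the surplus by the classical cancellation argument used in the proof of Theorem~\ref{thm: asc sig =00003D> min sig}: a top cancellation among index-$\le i$ terms is rewritten through an $\spol{f_a}{f_b}$ with $a,b\le i$, which has a standard representation since $F_i$ is a Gr\"obner basis; a top cancellation involving $h_{i+1}f_{i+1}$, which forces the top monomial to be $\nu t_{f_{i+1}}$, is rewritten through an $\spol{f_{i+1}}{f_a}$, $a\le i$. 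That pair is created by line~\ref{line: create initial P}, and since $\lcm(t_{f_{i+1}},t_{f_a})\mid\nu t_{f_{i+1}}$ its signature is $\sigle\nu\sigvar_{i+1}\siglt\sigma\sigvar_{i+1}$, so the inductive hypothesis applies to it; each substitution strictly lowers the leading monomial, so by the monomial well-order the process terminates in a $\als$-representation of $(\sigma\sigvar_{i+1},s)$. The triples from lines~\ref{line: higher-sig reduction} and~\ref{line: new crit pair} are handled the same way after first rewriting $r$ and $g$ with the analogous facts for the earlier triple and for the members of $G$. The crux — and where I expect nearly all the work — is the termination and bookkeeping of this last rewriting: one must show the descent into the auxiliary $\spol{f_{i+1}}{f_a}$ is well founded even when $\spol{f_{i+1}}{f_a}$ is itself signature-minimal (so the inductive hypothesis yields no representation for it), which forces one to invoke the ascending-degree-then-signature processing order of Algorithm~\ref{alg: sig-based GB computation} — guaranteeing $\spol{f_{i+1}}{f_a}$ was already semi-completely reduced, its reductum appearing in $G$, whence a standard representation via Corollary~\ref{cor: sig-safe red preserves sig} — while simultaneously keeping every signature introduced by a substitution strictly below $\sigma\sigvar_{i+1}$; checking that these requirements are compatible, i.e.\ that the ``already has a $\als$-representation'' escape is exactly broad enough, is the heart of the lemma, with the degenerate reading of $\lm(0)$ needing only a brief aside.
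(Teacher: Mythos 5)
Your proof takes essentially the same route as the paper's, but fills in a great deal that the paper compresses into two sentences. The paper's own argument for the second assertion is simply: take a representation $s=\sum h_k f_k$ realizing $\minsig(s)\siglt\sigma\sigvar_{i+1}$, observe that every top-cancellation among the $h_k f_k$ has strictly smaller natural signature, and appeal to the ascending-signature processing order to assert that these cancellations ``have been considered already'' and so have $\als$-representations, which one then substitutes in repeatedly. You are making explicit the two things the paper leaves tacit: the first-part verification by cases on the creating line, and the well-founded rewriting via an induction on signature plus a case split on the index of $\minsig(s)$. Your $m\le i$ shortcut (reduce directly modulo the Gr\"obner basis $F_i$, where the signature bound is automatic because every $\minsig(f_k)$ has index $\le i$) is a nice simplification not spelled out in the paper, though it amounts to the same thing.

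The subtlety you flag in the last paragraph --- what to do when the auxiliary $\spol{f_{i+1}}{f_a}$ is itself signature-minimal, so the inductive hypothesis hands you no representation --- is exactly the point the paper's ``have been considered already'' is papering over, and your proposed resolution is the right one. Two remarks to tighten it. First, read ``when Algorithm~\ref{alg: sig-based GB computation} would generate $s$'' as the moment line~\ref{line: sig-safe red of spoly} is about to compute $\spol{p}{q}$, not the moment the triple enters $P$ or $S$ at line~\ref{line: create initial P}; under that reading every pair of strictly smaller signature has by then either been $\sigma$-reduced or discarded. Second, a reduced pair always yields a $\als$-representation regardless of whether its recorded signature is minimal --- the reduction itself decomposes the $S$-polynomial into reductors (each with signature strictly below $\mu\sigvar_{i+1}$ and leading monomial $\le\lm(s)$) plus the reductum $r$, and $(\mu\sigvar_{i+1},r)$ is appended to $G$ --- while a discarded pair is covered by Lemma~\ref{lem: sig-cancelling sig implies drop} or, once lines~\ref{line: init Syz} through~\ref{line: update Syz} are active, by Proposition~\ref{pro: faugere's criterion} and Lemmas~\ref{lem: use the discovered non-trivial syzygies},~\ref{lem: rewritable criteria}. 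That closes the escape you were worried about; the descent on the leading monomial then terminates by the monomial well-order as you say. So this is not a genuinely different proof, just a more careful one, and with the timing clarification it is complete.
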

\begin{proof}
That $\sigma\sigvar_{i+1}\in\sig\left(s\right)$ follows from Proposition~\ref{pro: natural sigs},
Corollary~\ref{cor: sig-safe red preserves sig}, and inspection
of the algorithm. For the second assertion, suppose that $\sigma\sigvar_{i+1}\neq\minsig\left(s\right)$.
Let $\tau\sigvar_{j}=\minsig\left(s\right)$; by definition, there
exist $h_{1},\ldots,h_{j}$ such that $j\leq i+1$, $s=h_{1}f_{1}+\cdots+h_{j}f_{j}$,
$h_{j}\neq0$ , and $\lm\left(h_{j}\right)=\tau$. Notice $\tau\sigvar_{j}\siglt\sigma\sigvar_{i+1}$.
Since the algorithm proceeds by ascending \altsig, top-cancellations
of smaller signature have been considered already. Hence, all top-cancellations
among the $h_{k}f_{k}$ would have \als-representations at the moment
Algorithm~\ref{alg: sig-based GB computation} would generate $s$.
We can therefore rewrite the top-cancellations repeatedly until we
conclude with a \als-representation of $s$.\end{proof}
\begin{lem}
\label{lem: sig-cancelling sig implies drop}Suppose that line~\ref{line: need to create crit pairs?}
prevents the algorithm from creating critical pairs using $\left(\sigma\sigvar_{i+1},r\right)$.
Then $r=0$, the corresponding $S$-poly\-nomials have \als-represen\-tations
already, or will after consideration of pairs queued in $P\cup S$.\end{lem}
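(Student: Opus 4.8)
The plan is to settle $r=0$ at once and otherwise to exploit the fact that the test in line~\ref{line: need to create crit pairs?} failed only because $\left(\sigma\sigvar_{i+1},r\right)$ is \als-redundant to $G$. Fix a witness $\left(\tau\sigvar_{i+1},g\right)\in G$ with $\tau\mid\sigma$ and $\lm(g)\mid\lm(r)$; descending to a witness for $g$ whenever $g$ is itself \als-redundant (which is again a witness for $r$, and, $G$ being finite, can be done only finitely often), we may assume $g$ is not \als-redundant. Write $\sigma=u\tau$ and $t_r=v\,\lm(g)$ with $u,v\in\monset$. The critical pairs the algorithm declines to build are the $\spol{r}{h}$ with $h$ nonzero and not \als-redundant in the current $G$, and only those that line~\ref{line: don't create pairs for cancelling ves} would not itself discard; fix one such $h$ and let $S=\frac{t_{r,h}}{t_r}\cdot\sigma\sigvar_{i+1}$ be the signature the algorithm would have attached to the oriented pair built from $r$ and $h$. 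The task is to exhibit a \als-representation of $\spol{r}{h}$ with respect to $G$, allowing for pairs still queued in $P\cup S$.

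Since $\lm(g)\mid t_r\mid t_{r,h}$, the classical $S$-polynomial identity for a common divisor (see \cite{BWK93}) writes $\spol{r}{h}$, up to a nonzero field scalar and with $t_{g,r}=t_r$, as
\[
\spol{r}{h}=\frac{t_{r,h}}{t_{g,r}}\,\spol{g}{r}-\frac{t_{r,h}}{t_{g,h}}\,\spol{g}{h},
\]
in which the two copies of $\frac{t_{r,h}}{\lm(g)}\,g$ cancel. So it is enough to reduce both $\spol{g}{r}$ and $\spol{g}{h}$ to zero modulo $G$, possibly only after pairs queued in $P\cup S$ have been processed: adding the two resulting reductions, each scaled by the monomial shown above, produces a representation of $\spol{r}{h}$ by elements of $G$ in which every leading monomial lies strictly below $t_{r,h}$, and the top cancellations among those monomials carry signature $\sigle S$, so rewriting them exactly as in the proof of Theorem~\ref{thm: asc sig =00003D> min sig} turns the representation into a bona fide \als-representation of $\spol{r}{h}$.

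For $\spol{g}{h}$ this is immediate: $g$ and $h$ are nonzero and not \als-redundant, so when the later of the two was appended to $G$ the matching oriented pair was either placed in $P\cup S$ — and its eventual semi-complete reduction, which preserves the signature bound by Corollary~\ref{cor: sig-safe red preserves sig} and whose nonzero remainder is appended to $G$, supplies the representation — or discarded by line~\ref{line: don't create pairs for cancelling ves} because its two $S$-vector signatures were level, in which case the first alternative in the hypothesis of Theorem~\ref{thm: asc sig =00003D> min sig} already covers $\spol{g}{h}$. For $\spol{g}{r}$ I use that $r$ is the remainder of a \emph{semi-complete} $\sigma$-reduction of $\spol{p}{q}$ modulo $G$: since $\lm(g)\mid t_r$ and $g\in G$, a type-(A) reduction of $r$ by $g$ is no longer available, so Corollary~\ref{cor: sig-safe red preserves sig}(A) forces $v\ge u$, and hence the signature $v\,\tau\sigvar_{i+1}$ of $\spol{g}{r}$ is no smaller than $\sigma\sigvar_{i+1}$; in fact it equals the signature of the pair $\left(v\,\tau\sigvar_{i+1},g,r\right)$ that line~\ref{line: identify new vestige} would otherwise have emitted. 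When $v=u$, that signature multiplied by $t_{r,h}/t_{g,r}$ does not exceed $S$, and $\spol{g}{r}$ is disposed of at a signature the algorithm reaches no later than the current batch in $S$; when $v>u$, the only contributions of signature $\siggt S$ are exactly the $\frac{t_{r,h}}{\lm(g)}\,g$ terms that cancel out of the identity, so they do no harm.

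I expect the $v>u$ case to be the real obstacle. There the bare identity has summands of signature $\siggt S$, and to legitimise the combination one must first cancel the $g$-parts and then rewrite the residual, strictly-lower-signature top cancellations; by the algorithm's ascending-signature discipline together with Lemma~\ref{lem: sig(s) is signature unless sig-safe rep}, each such top cancellation has already received a \als-representation by the time $r$ is appended to $G$, so the rewriting is available — but carrying it out carefully, and checking that it never reintroduces work of signature $\sigge S$, is the heart of the matter. A lesser chore is making ``eventually'' precise: for each suppressed $\spol{r}{h}$ one should name the concrete pair in $P\cup S$ (or the concrete later append to $G$) supplying each sub-representation invoked above.
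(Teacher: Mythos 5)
Your decomposition of $\spol{r}{h}$ via the Buchberger chain identity into $\spol{g}{r}$ and $\spol{g}{h}$ is a genuinely different route from the paper's, but it introduces a circularity that you flag yourself without resolving, and that circularity is fatal in exactly the case you single out as ``the heart of the matter.'' The pair $\spol{g}{r}$ is itself one of the critical pairs that line~\ref{line: need to create crit pairs?} prevents the algorithm from creating, so its \als-representation is part of what the lemma is claiming, not something you may assume. You cannot appeal to the pair $(v\tau\sigvar_{i+1},g,r)$ that line~\ref{line: identify new vestige} ``would otherwise have emitted,'' because it was not emitted — suppression of exactly that pair is the hypothesis. Nor does an induction by ascending signature save the day: when $v>u$ and $\lm(h)\mid t_r$ one has $t_{r,h}=t_r$, so the signature of $\spol{g}{r}$ is $v\tau\sigvar_{i+1}\sigge u\tau\sigvar_{i+1}=\sigma\sigvar_{i+1}$, which is no smaller than the signature of the $\spol{r}{h}$ you started from; the chain identity has traded one suppressed pair for another of no lower signature, and the descent does not terminate. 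Your claim that the only contributions of signature exceeding the target are the two $\frac{t_{r,h}}{\lm(g)}g$ terms that cancel conflates the raw two-term expression $vg-\tfrac{c_g}{c_r}r$ with an actual \als-representation of $\spol{g}{r}$; the former has a top cancellation at $t_r$ and is therefore not a standard representation at all, and a true \als-representation of $\spol{g}{r}$ would be permitted to carry terms of signature up to $v\tau\sigvar_{i+1}$, none of which need cancel against anything in the $\spol{g}{h}$ part.

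The paper's proof avoids this trap by never touching $\spol{g}{r}$. It writes $s=r-tg$ where $t$ is chosen so that $t\tau=\sigma$ (note: $t$ matches the \emph{signature} cofactor, not the leading-monomial cofactor $v$, and the semi-completeness argument gives $v\ge t$). Then $s$ has natural signature strictly below $\sigma\sigvar_{i+1}$ and, by the ascending-signature discipline, already has a \als-representation. Substituting $r=s+tg$ directly into $\spol{r}{q}$ leaves top cancellations that involve only $s$ (already handled, at lower signature) and $g,q$ (both already in $G$ with their critical pairs generated). The suppressed pair $\spol{g}{r}$ simply never enters the argument. If you want to rescue a chain-rule-style proof you would need a different pivot — roughly, decompose around $s$ rather than around $g$ — at which point you are essentially reproducing the paper's argument. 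Your preliminary descent to a non-\als-redundant witness $g$ is a sensible clean-up step, but note that Definition~\ref{def: sig-redundant} makes any element of $G$ trivially \als-redundant to itself, so the descent needs to be phrased against earlier elements of $G$ to be well-founded.
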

\begin{proof}
If $r=0$, then we are done. Suppose $r\neq0$; by line~\ref{line: need to create crit pairs?},
there exist $\left(\tau\sigvar_{i+1},g\right)\in G$ such that $\tau\mid\sigma$
and $\lm\left(g\right)\mid\lm\left(r\right)$. Let $u\in\monset$
such that $u\lm\left(g\right)=\lm\left(r\right)$. If $u\tau<\sigma$,
then line~\ref{line: sig-safe red of spoly} did not perform a semi-complete
$\sigma$-reduction of $\spol{p}{q}$, a contradiction. Hence $u\tau\geq\sigma$.

Let $t\in\monset$ such that $u\tau\geq\sigma=t\tau$, so $\lm\left(r\right)=u\lm\left(g\right)\geq t\lm\left(g\right)$.
The signature $\mu\sigvar_{j}$ of $s=r-tg$ is smaller than $\sigma\sigvar_{i+1}$,
so Algorithm~\ref{alg: sig-based GB computation} has considered
top-cancellations of this and smaller \altsig. Hence, $\left(\mu\sigvar_{j},s\right)$
has a $\mu$-representation. Critical pairs have been generated for
$g$, so for any $\left(\zeta\sigvar_{i+1},q\right)\in G$, $\spol{r}{q}=ur-cvq=u\left(s+tg\right)-cvq$,
whose top-cancellations already have a \als-repre\-sentation or
will after consideration of pairs queued in $P\cup S$.\end{proof}
\begin{thm}
\label{thm: sig-based alg terms correctly}Algorithm~\ref{alg: sig-based GB computation}
terminates correctly.\end{thm}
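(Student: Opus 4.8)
The plan is to establish the two assertions bundled in ``terminates correctly'' separately: \emph{partial correctness} (if Algorithm~\ref{alg: sig-based GB computation} halts, the final value of $G$ satisfies the hypothesis of Theorem~\ref{thm: asc sig =00003D> min sig}, so the set returned in line~\ref{line: return} is a Gr\"obner basis of $I_{i+1}$), and \emph{termination}. Throughout I would lean on the facts that only nonzero, non-\als-redundant polynomials ever spawn critical pairs (the for-loop at line~\ref{line: create pairs w/nonzero polys}), that critical pairs and $\sigma$-reductions respect signatures (Proposition~\ref{pro: natural sigs}, Corollary~\ref{cor: sig-safe red preserves sig}), and on Lemmas~\ref{lem: sig(s) is signature unless sig-safe rep} and~\ref{lem: sig-cancelling sig implies drop}.

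\emph{Partial correctness.} Every polynomial occurring in $G$ lies in $I_{i+1}$, $f_{1},\ldots,f_{i+1}$ occur among them, and the pairs dropped in line~\ref{line: return} are zero or \als-redundant (so with leading monomial divisible by that of a retained element); hence it suffices to verify the hypothesis of Theorem~\ref{thm: asc sig =00003D> min sig} for the full final $G$ and to note that discarding those pairs leaves a Gr\"obner basis. Fix two pairs $\left(c\sigma\sigvar_{j},p\right),\left(d\tau\sigvar_{k},q\right)\in G$. If $p,q$ both lie among $f_{1},\ldots,f_{i}$, then $\spol{p}{q}$ has a standard representation with respect to $\left\{ f_{1},\ldots,f_{i}\right\}$ because that set is a Gr\"obner basis, so assume at least one of $p,q$ does not, and that the cross-multiplied signatures $\frac{t_{p,q}}{t_{p}}\sigma\sigvar_{i+1}$, $\frac{t_{p,q}}{t_{q}}\tau\sigvar_{k}$ are not level. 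I would then follow the corresponding critical pair: line~\ref{line: create initial P} and the for-loop at line~\ref{line: create pairs w/nonzero polys} guarantee that, unless the test at line~\ref{line: don't create pairs for cancelling ves} discards it precisely because those signatures are level (the first alternative of Theorem~\ref{thm: asc sig =00003D> min sig}), a pair for $\left\{ p,q\right\}$ with some signature $\mu\sigvar_{i+1}$ enters $S$ (line~\ref{line: higher-sig reduction}) or $P$ (line~\ref{line: new crit pair}); since the algorithm halts only with $P$ and $S$ empty, it is eventually selected in line~\ref{line: pick spoly} and reduced in line~\ref{line: sig-safe red of spoly} to some $\left(\mu\sigvar_{i+1},r\right)$, the reduction preserving $\mu\sigvar_{i+1}\in\sig\left(\spol{p}{q}\right)$. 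By Lemma~\ref{lem: sig(s) is signature unless sig-safe rep}, either $\spol{p}{q}$ already has a \als-representation with respect to $G$ (the second alternative), or $\mu\sigvar_{i+1}=\minsig\left(\spol{p}{q}\right)$; in the latter case, unrolling the semi-complete $\mu$-reduction writes $\spol{p}{q}=r+\sum_{\ell}c_{\ell}t_{\ell}g_{\Lambda_{\ell}}$ with each summand of signature $\sigle\mu\sigvar_{i+1}$ and leading monomial at most $\lm\left(\spol{p}{q}\right)$. If $r\neq0$ is not \als-redundant it is appended to $G$ and this is already a \als-representation; if $r=0$ it is one outright; and if $r\neq0$ is \als-redundant, Lemma~\ref{lem: sig-cancelling sig implies drop} rewrites $r$ in terms of a divisor $g\in G$ together with elements of strictly smaller signature that were processed earlier, completing a standard representation of $\spol{p}{q}$ with respect to $G$, hence with respect to the returned basis.

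\emph{Termination.} The crux is to bound the number $M$ of nonzero, non-\als-redundant polynomials ever appended to $G$ at a step where the test at line~\ref{line: need to create crit pairs?} succeeds. If one such polynomial $r$ precedes another such $r'$, then when $r'$ is processed $r$ already lies in $G$, so non-\als-redundancy of $r'$ forbids $\tau\mid\tau'$ and $\lm\left(r\right)\mid\lm\left(r'\right)$ from holding together, where $\tau,\tau'$ are the respective signature monomials; hence the pairs $\left(\tau,\lm\left(r\right)\right)$ form a divisibility antichain in $\monset\times\monset$, and by Dickson's Lemma $M<\infty$. New critical pairs are created only inside the for-loop at line~\ref{line: create pairs w/nonzero polys}, which executes only at such a successful test (at most $M$ times) and each time ranges over the nonzero, non-\als-redundant elements of $G$, of which there are at most $i+1$ plus the earlier such appends---finitely many. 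So only finitely many critical pairs are ever created. Consequently the inner loop at line~\ref{line: loop S} processes (and removes) one critical pair per pass, and the outer loop at line~\ref{line: loop P} runs the inner loop at least once per pass, so both run only finitely often; and each reduction at line~\ref{line: sig-safe red of spoly} terminates by the usual argument that leading monomials strictly decrease along a reduction chain in the well-ordered monoid $\monset$. Hence the algorithm halts.

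I expect termination to be the main obstacle, and within it the delicate point is controlling the zero and \als-redundant polynomials that Algorithm~\ref{alg: sig-based GB computation} deliberately retains in $G$: one must rule out that they generate an unbounded stream of critical pairs. That is exactly why line~\ref{line: create pairs w/nonzero polys} bars them from spawning pairs, so that all pair-creation is driven by the non-\als-redundant polynomials, which Dickson's Lemma caps at finitely many. On the correctness side, the secondary obstacle is the \als-redundant sub-case above, where the polynomial appended to $G$ is not itself part of the returned basis and must be eliminated from the standard representation via Lemma~\ref{lem: sig-cancelling sig implies drop}.
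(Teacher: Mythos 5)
Your proof is correct and follows essentially the same route as the paper: partial correctness is obtained by invoking Theorem~\ref{thm: asc sig =00003D> min sig} via Lemmas~\ref{lem: sig(s) is signature unless sig-safe rep} and~\ref{lem: sig-cancelling sig implies drop}, and termination by applying Dickson's Lemma to the sequence of pairs $\left(\sigma,\lm\left(r\right)\right)$, which is exactly what the paper's Noetherian $\monset'$-submodule argument encodes (your phrase ``divisibility antichain'' is slightly off---the condition you derive is only that no earlier pair divides a later one---but that weaker condition is precisely what Dickson's Lemma needs, so the argument stands).
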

\begin{proof}
\emph{Correctness:} If we show that the output of the algorithm satisfies
the hypothesis of Theorem~\ref{thm: asc sig =00003D> min sig}, then
we are done. By Lemma~\ref{lem: sig(s) is signature unless sig-safe rep}
and the strategy of ascending signature, we know for any $\left(\sigma\sigvar_{j},g\right)\in G$
that $g=0$ or $\sigma\sigvar_{j}=\minsig\left(g\right)$. The only
$S$-poly\-nomials for which the algorithm does not explicitly compute
\als-representations are those satisfying the criteria of the \algorithmicif\ statement
of line~\ref{line: need to create crit pairs?} and the criterion
of line~\ref{line: don't create pairs for cancelling ves}. The criterion
of line~\ref{line: need to create crit pairs?} is the hypothesis
of Lemma~\ref{lem: sig-cancelling sig implies drop}; with it and
the criterion of line~\ref{line: don't create pairs for cancelling ves},
we complete the hypothesis of Theorem~\ref{thm: asc sig =00003D> min sig}.

\emph{Termination:} Let $\monset'$ be the monoid of monomials in
$x_{1},\ldots,x_{2n}$; as with $\monset$, we can consider it to
be a Noetherian $\monset'$-monomodule. Any $\left(\sigma\sigvar_{i+1},r\right)$
added to $G$ with $r\neq0$ corresponds to an element of $\monset'$
via the bijection
\[
\left(\sigma,\lm\left(r\right)\right)=\left(\prod x_{i}^{\alpha_{i}},\prod x_{i}^{\beta_{i}}\right)\rightarrow\prod x_{i}^{\alpha_{i}}\prod x_{n+i}^{\beta_{i}}.
\]
Let $J$ be the $\monset'$-submodule generated by these elements
of $G$. Suppose the algorithm adds $\left(\sigma\sigvar_{i+1},r\right)$
to $G$ and $J$ does not expand; this implies that there exists $\left(\tau\sigvar_{i+1},g\right)\in G$
such that $\tau\mid\sigma$ and $\lm\left(g\right)\mid\lm\left(r\right)$.
Since $\left(\sigma\sigvar_{i+1},r\right)$ is \als-redundant, line~\ref{line: need to create crit pairs?}
prevents it from generating new pairs.

Hence, every time Algorithm~\ref{alg: sig-based GB computation}
adds $\left(\sigma\sigvar_{i+1},r\right)$ to $G$, either the submodule
$J$ expands, or the algorithm abstains from computing pairs. A submodule
of  $\monset'$ can expand only finitely many times, so the algorithm
can compute only finitely many pairs. Hence, the algorithm terminates.
\end{proof}
The following interesting result will prove useful; its criterion
is used in \cite{ArriPerry2009,GGV10} to prevent the generation of
new pairs.
\begin{lem}
\label{lem: criteria of arri, ggv suffice}To see if $\left(\sigma\sigvar_{i+1},r\right)$
is \als-redundant in Algorithm~\ref{alg: sig-based GB computation},
it suffices to check if there exist $\left(\tau\sigvar_{i+1},g\right)\in G$
and $t\in\monset$ such that $t\tau=\sigma$ and $t\lm\left(g\right)=\lm\left(r\right)$.\end{lem}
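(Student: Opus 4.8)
The plan is to prove the biconditional behind ``suffices''. One direction is immediate: if $(\tau\sigvar_{i+1},g)\in G$ and $t\in\monset$ satisfy $t\tau=\sigma$ and $t\lm(g)=\lm(r)$, then $\tau\mid\sigma$ and $\lm(g)\mid\lm(r)$, so $(\sigma\sigvar_{i+1},r)$ is \als-redundant to $G$ by Definition~\ref{def: sig-redundant}. Thus the simplified test can only ever miss something that is genuinely not \als-redundant, and the real content is the converse.

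So suppose $(\sigma\sigvar_{i+1},r)$ is \als-redundant, witnessed by $(\tau\sigvar_{i+1},g)\in G$ with $\tau\mid\sigma$ and $\lm(g)\mid\lm(r)$; write $\sigma=t_1\tau$ and $\lm(r)=t_2\lm(g)$. I would first record that $r$ is the output of a \emph{semi-complete} $\sigma$-reduction (line~\ref{line: sig-safe red of spoly}), so no type-(A) $\sigma$-reduction of $r$ modulo $G$ is possible. Scaling $g$ by $t_2$ cancels $\lt(r)$ and carries signature $t_2\tau\sigvar_{i+1}$ (Corollary~\ref{cor: sig-safe red preserves sig}); were $t_2\tau\sigvar_{i+1}\siglt\sigma\sigvar_{i+1}$, this would be precisely a type-(A) $\sigma$-reduction of $r$, which is excluded. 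Hence $t_2\tau\sigge\sigma$, i.e.\ $t_2\ge t_1$ in the monomial order. It therefore suffices to prove $t_1\ge t_2$; once $t_1=t_2$, the witness $(\tau\sigvar_{i+1},g)$ together with the multiplier $t:=t_1=t_2$ is exactly what the lemma asks for.

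For $t_1\ge t_2$ I would argue by contradiction. Assume $t_1<t_2$, so $\lm(t_1g)=t_1\lm(g)<t_2\lm(g)=\lm(r)$, and set $s:=r-t_1g$. Since $\sigma\sigvar_{i+1}\in\sig(r)$ is monic and $\minsig(t_1g)=t_1\tau\sigvar_{i+1}=\sigma\sigvar_{i+1}$ is monic (Proposition~\ref{pro: natural sigs}), the $\sigvar_{i+1}$-components of the corresponding module representations of $r$ and $t_1g$ have the same leading term, which cancels; so either $s=0$ --- whence $r=t_1g$, forcing $t_2=t_1$, a contradiction --- or $\minsig(s)\siglt\sigma\sigvar_{i+1}$, in which case $\lm(s)=\lm(r)$ because $\lm(t_1g)<\lm(r)$. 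Now I would use the ascending-signature invariant underlying the proofs of Theorem~\ref{thm: asc sig =00003D> min sig} and Lemma~\ref{lem: sig-cancelling sig implies drop}: when $r$ is processed, every element of $I_{i+1}$ of signature $\siglt\sigma\sigvar_{i+1}$ already has a standard signature-representation with respect to the current~$G$. Applied to $s$ this yields $s=\sum_kH_kg_k$ with $\lm(H_k)\lm(g_k)\le\lm(s)=\lm(r)$ and $\lm(H_k)\minsig(g_k)\sigle\minsig(s)\siglt\sigma\sigvar_{i+1}$ for each $k$. The leading monomial $\lm(r)$ of $s$ must be produced by some monomial term $c^*w^*g^*$ of this representation, so $w^*\lm(g^*)=\lm(r)$ (hence $\lm(g^*)\mid\lm(r)$ and $g^*\ne0$) and $w^*\minsig(g^*)\siglt\sigma\sigvar_{i+1}$, with $(\,\minsig(g^*),g^*)\in G$. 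But then, for the appropriate $a\in\field$, $r-aw^*g^*$ is a type-(A) $\sigma$-reduction of $r$ by Corollary~\ref{cor: sig-safe red preserves sig}, contradicting semi-completeness. Hence $t_1=t_2$, and $(\tau\sigvar_{i+1},g)$ with $t:=t_1=t_2$ is the required pair.

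The step I expect to be the main obstacle is the appeal to the ascending-signature invariant: one must verify that the \emph{partial} basis $G$ present when $(\sigma\sigvar_{i+1},r)$'s \als-redundancy is tested already supplies standard signature-representations for all elements of signature below $\sigma\sigvar_{i+1}$ (equivalently, that $G$ restricted to those signatures satisfies the relevant hypotheses of Theorem~\ref{thm: asc sig =00003D> min sig}). This is exactly the fact extracted, for one particular auxiliary polynomial, inside the proof of Lemma~\ref{lem: sig-cancelling sig implies drop}, so I would cite it from there rather than reprove it; everything else is bookkeeping with Corollary~\ref{cor: sig-safe red preserves sig} and Definition~\ref{def: sig-representations}.
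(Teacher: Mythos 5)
Your argument is correct and follows the same route as the paper: you let $t_1,t_2$ play the roles of the paper's $t,u$, rule out $t_2\tau\siglt\sigma$ immediately as a type-(A) reduction contradicting semi-completeness, then rule out $t_1<t_2$ by observing that $r-t_1g$ has signature strictly below $\sigma\sigvar_{i+1}$ and so already has a \als-representation whose top produces a divisor of $\lm(r)$ with small enough signature to be a type-(A) reductor of $r$ itself --- again contradicting semi-completeness. The only cosmetic differences are that you handle the sub-case $s=0$ explicitly (the paper absorbs it implicitly, since $s=0$ forces $t_1=t_2$) and that you flag the ascending-signature invariant as the load-bearing step, which the paper also uses but states more tersely.
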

\begin{proof}
Assume that there exists $\left(\tau\sigvar_{i+1},g\right)\in G$
such that $\tau\mid\sigma$ and $\lm\left(g\right)\mid\lm\left(r\right)$.
Let $t,u\in\monset$ such that $t\tau=\sigma$ and $u\lm\left(g\right)=\lm\left(r\right)$.
If $u\tau<\sigma$, then line~\ref{line: sig-safe red of spoly}
did not compute a semi-complete $\sigma$-reduction of $\spol{p}{q}$,
a contradiction. If $u\tau>\sigma$, then $u>t$, so $\lm\left(r\right)=u\lm\left(g\right)>t\lm\left(g\right)$.
The signature of $r-tg$ is smaller than $\sigma$, so $r-tg$ has
a \als-representation with respect to $G$. In addition, $\lm\left(r-tg\right)=\lm\left(r\right)$;
by the definition of a \als-representation, there exist $\left(\mu\sigvar_{j},h\right)\in G$
and $u\in\monset$ such that $u\lm\left(h\right)=\lm\left(r-tg\right)=\lm\left(r\right)$
and $u\cdot\mu\sigvar_{j}$ is no greater than the signature of $r-tg$;
that is, $u\cdot\mu\sigvar_{j}\siglt\sigma\sigvar_{i+1}$. But then
line~\ref{line: sig-safe red of spoly} did not compute a semi-complete
$\sigma$-reduction of $\spol{p}{q}$, a contradiction.
\end{proof}

\subsection{Pruning $P$ and $S$}

\label{sec: prune P, S}This section does not propose any criteria
that have not appeared elsewhere; rather, it lays the groundwork for
showing how lines~\ref{line: init Syz},~\ref{line: prune P},~\ref{line: prune S},
and~\ref{line: update Syz} can use such criteria to improve the
efficiency of Algorithm~\ref{alg: sig-based GB computation}. The
general idea is:
\begin{itemize}
\item $\Syz$ will consist of a list of monomials corresponding to known
syzygies; i.e., if $t\in\Syz$, then $t\sigvar_{i+1}$ is a \altsig\
of a known syzygy.
\item Line~\ref{line: prune P} removes $\left(\sigma\sigvar_{i+1},p,q\right)$
from $P$ if there exists $t\in\Syz$ such that $t\mid\sigma$.
\item Line~\ref{line: prune S} does the same, and ensures that if $\left(\sigma\sigvar_{i+1},p,q\right)$,
$\left(\sigma\sigvar_{i+1},f,g\right)\in S$, then at most one of
these is retained.
\end{itemize}
Already, Lemma~\ref{lem: sig(s) is signature unless sig-safe rep}
suggests:
\begin{lyxlist}{(MM)}
\item [{\nonminsig}] Discard any $\left(\sigma\sigvar_{i+1},f,g\right)\in P\cup S$
if $\sigma\sigvar_{i+1}$ is not the minimal signature of $\spol{f}{g}$.\end{lyxlist}
\begin{prop}
\label{pro: faugere's criterion}Line~\ref{line: init Syz} can put
$\Syz=\left\{ \lm\left(g\right):g\in F_{i}\right\} $.
\end{prop}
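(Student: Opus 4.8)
The plan is to check that each monomial this initialization puts into $\Syz$ is indeed the monomial part of a \altsig\ of a genuine syzygy of $F_{i+1}$, which is exactly what the description of $\Syz$ at the start of this section asks of its entries. Granting that, the soundness of the pruning carried out in lines~\ref{line: prune P} and~\ref{line: prune S} follows from the criterion~\nonminsig\ together with Lemma~\ref{lem: sig(s) is signature unless sig-safe rep}, so the substantive content is a single classical observation and the only delicate part is the bookkeeping that connects it to~\nonminsig.

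First I would exhibit the syzygies. Fix $g\in F_i$; since $F_i=(f_1,\ldots,f_i)$ we may write $g=f_k$ for some $k\le i$ (and in any case $g\in I_i$). Setting $h_k=f_{i+1}$, $h_{i+1}=-f_k$, and $h_j=0$ for every other $j\le i+1$ gives $\sum_{j=1}^{i+1}h_jf_j=f_{i+1}f_k-f_kf_{i+1}=0$, so $(h_1,\ldots,h_{i+1})$ is a syzygy of $F_{i+1}$ with $h_{i+1}=-f_k\ne 0$ and $\lm(h_{i+1})=\lm(g)$. As $i+1$ is the largest index at which $h_j\ne 0$, a \altsig\ of this syzygy is $-\lc(g)\lm(g)\sigvar_{i+1}$, whose monic form is $\lm(g)\sigvar_{i+1}$. (If one prefers not to take $g$ to be one of the $f_j$, the same computation works with $h_{i+1}=-g$ and $h_1f_1+\cdots+h_if_i=gf_{i+1}$, the latter being solvable because $gf_{i+1}\in I_i=\ideal{f_1,\ldots,f_i}$.) Letting $g$ range over $F_i$ shows that $\Syz=\{\lm(g):g\in F_i\}$ is an admissible value in line~\ref{line: init Syz}.

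It then remains to confirm that pruning $(\sigma\sigvar_{i+1},p,q)$ whenever $\lm(g)\mid\sigma$ for some $g\in F_i$ loses nothing. Writing $\sigma=t\lm(g)$ and scaling the syzygy above by $t$ yields a syzygy of \altsig\ $\sigma\sigvar_{i+1}$; subtracting a suitable $\field$-multiple of it from any representation of $\spol{p}{q}$ that realizes the signature $\sigma\sigvar_{i+1}$ cancels the leading term of the $\sigvar_{i+1}$-component and hence strictly lowers the signature (possibly dropping it into a lower position). Therefore $\sigma\sigvar_{i+1}\ne\minsig(\spol{p}{q})$, which is precisely the condition under which~\nonminsig\ applies, and Lemma~\ref{lem: sig(s) is signature unless sig-safe rep} then gives that $\spol{p}{q}$ already has a \als-representation with respect to $G$, so the triple may be discarded from $P$ (resp.\ $S$). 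I expect this last step to be the only one requiring genuine care: one must check that once the leading $\sigvar_{i+1}$-terms cancel, what remains is either $0$ or has a strictly smaller signature, which is immediate from the definition of $\siglt$ but is exactly where a careless argument could slip.
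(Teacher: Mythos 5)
The paper itself offers no proof of this Proposition; it simply defers to Lemma~16 of~\cite{EderPerry09} (Faug\`ere's Criterion), in keeping with the remark that Propositions ``are either trivial or proved elsewhere.'' Your self-contained argument supplies exactly what that reference contains: the Koszul syzygy $f_{i+1}f_k - f_k f_{i+1} = 0$ realizes $\lm(g)\sigvar_{i+1}$ as the signature of a genuine syzygy, and scaling by $t$ and subtracting from a signature-realizing representation of $\spol{p}{q}$ strictly lowers the $\sigvar_{i+1}$-component's leading monomial, so $\sigma\sigvar_{i+1}\neq\minsig(\spol{p}{q})$ and \nonminsig\ (via Lemma~\ref{lem: sig(s) is signature unless sig-safe rep}) justifies the discard. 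Both the syzygy construction and the signature-reduction step check out; the parenthetical generalization to $g\in I_i$ rather than $g=f_k$ is harmless but unnecessary here, since the algorithm's $F_i$ consists precisely of $f_1,\ldots,f_i$. This is a correct proof, and it takes the same route as the cited source, just written out in full.
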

For a proof, see Lemma 16 in~\cite{EderPerry09} (Faug\`ere's Criterion).
It is similar to the proof of the following criterion \cite{ArriPerry2009,GGV10}:
\begin{lem}
\label{lem: use the discovered non-trivial syzygies}If the result
of line~\ref{line: sig-safe red of spoly} is $\left(\sigma\sigvar_{i+1},r\right)$
with $r=0$, then line~\ref{line: update Syz} can add $\sigma$
to $\Syz$.\end{lem}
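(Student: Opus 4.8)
The plan is to follow the signature through the reduction chain and then exploit the fact that a natural signature of the zero polynomial is the same thing as a natural signature of a syzygy of $F_{i+1}$. Write $s=\spol{p}{q}$. By Lemma~\ref{lem: sig(s) is signature unless sig-safe rep}, whenever the algorithm creates a critical pair $\left(\sigma\sigvar_{i+1},p,q\right)$ we have $\sigma\sigvar_{i+1}\in\sig\left(s\right)$, so I may write the semi-complete $\sigma$-reduction of $s$ modulo $G$ computed in line~\ref{line: sig-safe red of spoly} as a chain $s=r_{0},r_{1},\ldots,r_{\ell}=r=0$ in which each step $r_{m}=r_{m-1}-a_{m}t_{m}g_{m}$ is a $\sigma$-reduction of $r_{m-1}$ with respect to some $g_{m}\in G$ (with $a_{m}\in\field$, $t_{m}\in\monset$).

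The heart of the argument is an induction showing that every $r_{m}$ keeps a natural signature at level $\sigma\sigvar_{i+1}$: there is $c_{m}\in\field\setminus\{0\}$ with $c_{m}\sigma\sigvar_{i+1}\in\sig\left(r_{m}\right)$. The base case $m=0$ is immediate ($c_{0}=1$). For the inductive step, $r_{m-1},g_{m}\in I_{i+1}$ and the step is a $\sigma$-reduction, so one of cases (A), (B) of Corollary~\ref{cor: sig-safe red preserves sig} applies: case (A) gives $c_{m}\sigma\sigvar_{i+1}\in\sig\left(r_{m}\right)$ with $c_{m}=c_{m-1}$, and case (B) gives $\left(c_{m-1}-a_{m}d\right)\sigma\sigvar_{i+1}\in\sig\left(r_{m}\right)$ for the appropriate $d\neq0$. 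The one place I expect to have to be careful is the claim $c_{m-1}-a_{m}d\neq0$: a step with $a_{m}d=c_{m-1}$ fails condition (B) of Corollary~\ref{cor: sig-safe red preserves sig}, hence is $\sigma$-unsafe in the sense of Definition~\ref{def: sig-safe reduction} and is never taken in a $\sigma$-reduction. A second, smaller point is that the induction must survive the final step $r_{\ell}=0$; here one should read $\sig\left(0\right)$ straight off the definition of a natural signature, namely as the set of all $c\tau\sigvar_{j}$ arising from a relation $h_{1}f_{1}+\cdots+h_{j}f_{j}=0$ with $h_{j}\neq0$, and check that the conclusions of Proposition~\ref{pro: natural sigs}, hence of Corollary~\ref{cor: sig-safe red preserves sig}, remain valid with that reading.

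Running the induction out to $m=\ell$ yields $c_{\ell}\sigma\sigvar_{i+1}\in\sig\left(0\right)$ with $c_{\ell}\neq0$. Unwinding the definition of a natural signature, there are $h_{1},\ldots,h_{i+1}\in R$ with $h_{i+1}\neq0$, $\lm\left(h_{i+1}\right)=\sigma$, $\lc\left(h_{i+1}\right)=c_{\ell}$, and $h_{1}f_{1}+\cdots+h_{i+1}f_{i+1}=0$; that is, $c_{\ell}^{-1}\left(h_{1},\ldots,h_{i+1}\right)$ is a syzygy of $F_{i+1}$ with natural signature $\sigma\sigvar_{i+1}$. Moreover this syzygy is \emph{known} to the algorithm: telescoping the chain $r_{0},\ldots,r_{\ell}$ and substituting signature-minimal representations for the $g_{m}$ exhibits it explicitly. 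Hence $\sigma$ is a monomial for which $\sigma\sigvar_{i+1}$ is the natural signature of a known syzygy, which is exactly the invariant that $\Syz$ is required to maintain, so line~\ref{line: update Syz} may add $\sigma$ to $\Syz$. As the text notes, the proof of Proposition~\ref{pro: faugere's criterion} (Lemma~16 of~\cite{EderPerry09}) runs along the same lines.
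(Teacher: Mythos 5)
Your induction along the reduction chain is a sound and rather more careful way to establish that the \altsig\ $\sigma\sigvar_{i+1}$ survives to a nontrivial signature of $0$, and you correctly flag the two points that need attention (that $\sigma$-unsafe steps of type (B) with $ad=c$ are excluded by Definition~\ref{def: sig-safe reduction}, and that $\sig(0)$ is read straight off the definition). The paper gets to the same place more economically: it invokes the fact that a $\sigma$-reduction to zero yields a standard representation $s=\sum H_k g_k$ whose signature bound is strictly below $\sigma\sigvar_{i+1}$, and subtracts this from a representation $s=\sum h_k f_k$ with $\lm(h_{i+1})=\sigma$ to get the syzygy with \altsig\ $\sigma\sigvar_{i+1}$ in one step, without tracking coefficients $c_m$ step by step.

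The substantive omission is that you stop at the point where $\sigma\sigvar_{i+1}$ is shown to be a \altsig\ of a known syzygy and then declare this to be ``the invariant $\Syz$ is required to maintain.'' But the assertion of the lemma --- that line~\ref{line: update Syz} \emph{can} add $\sigma$ to $\Syz$ --- is really a correctness claim: it must be safe for line~\ref{line: prune P} and line~\ref{line: prune S} to subsequently discard every $\left(\tau\sigvar_{i+1},p,q\right)$ with $\sigma\mid\tau$. That is exactly what the final sentence of the paper's sketch supplies: writing the known syzygy as $z=\sum h_k f_k-\sum H_k g_k$, for any pair $\left(\tau\sigvar_{i+1},p,q\right)$ with $u\sigma=\tau$ one has $\spol{p}{q}=\spol{p}{q}-uz$, whose \altsig\ is strictly smaller than $\tau\sigvar_{i+1}$; Lemma~\ref{lem: sig(s) is signature unless sig-safe rep} then says this pair already has, or will acquire, a \als-representation, so dropping it is harmless. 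Gesturing at the proof of Proposition~\ref{pro: faugere's criterion} does not discharge this step; the lemma's content \emph{is} that step, and you should spell it out. The rest of your argument is fine, though longer than the route the paper takes.
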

\begin{proof}
[(Sketch)]Suppose that line~\ref{line: sig-safe red of spoly} gives
$\left(\sigma\sigvar_{i+1},r\right)$ with $r=0$. Now, $r$ is the
$\sigma$-reduction of $s=\spol{p}{q}$ from line~\ref{line: sig-safe red of spoly},
and by Lemma~\ref{lem: sig(s) is signature unless sig-safe rep}
$\sigma\sigvar_{i+1}\in\sig\left(s\right)$. By definition, $\exists h_{1},\ldots,h_{i+1}\in R$
such that $s=\Sigma h_{k}f_{k}$ and $\lm\left(h_{i+1}\right)=\sigma$.
Since $r=0$, there exist $H_{1},\ldots,H_{\#G}$ such that $s=\Sigma H_{k}g_{k}$,
each $H_{k}=0$ or $\lm\left(H_{k}\right)\lm\left(g_{k}\right)\leq\lm\left(s\right)$,
and $\minsig\left(\Sigma H_{k}g_{k}\right)\siglt\sigma\sigvar_{i+1}$.
Hence $\Sigma h_{k}f_{k}-\Sigma H_{k}g_{k}=0$ and has \altsig\ $\sigma\sigvar_{i+1}$.
Suppose there exist $\left(\tau\sigvar_{i+1},p,q\right)\in P\cup S$
and $u\in\monset$ such that $u\sigma=\tau$; then $\spol{p}{q}=\spol{p}{q}-u\left(\Sigma h_{k}f_{k}-\Sigma H_{k}g_{k}\right)$
has signature smaller than $\tau$; now apply Lemma~\ref{lem: sig(s) is signature unless sig-safe rep}.
\end{proof}
Another criterion is implied by the following lemma.
\begin{lem}
\label{lem: rewritable criteria}Let $\tau\in\monset$, $B=\left\{ \left(\sigma_{j}\sigvar_{i+1},f_{j}\right)\in G:\sigma_{j}\mid\tau\right\} $.
We can choose any $\left(\sigma\sigvar_{i+1},f\right)\in B$ and discard
in line~\ref{line: prune S} any $\left(\tau\sigvar_{i+1},p,q\right)\in P$
if $p\neq f$, or if we compute $\left(\tau\sigvar_{i+1},f,g\right)$
where $p=f$ and $g\neq q$.\end{lem}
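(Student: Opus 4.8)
The plan is to verify that discarding pairs according to \rewritable\ leaves Theorem~\ref{thm: sig-based alg terms correctly} intact; by Theorem~\ref{thm: asc sig =00003D> min sig} it is enough to show that for every pair $(\tau\sigvar_{i+1},p,q)$ dropped in line~\ref{line: prune S} the $S$-polynomial $\spol{p}{q}$ still has a standard representation with respect to the $G$ finally returned by Algorithm~\ref{alg: sig-based GB computation}. If $B=\emptyset$ nothing is dropped, and if $\tau\sigvar_{i+1}$ is not the minimal signature of $\spol{p}{q}$ then \nonminsig\ already covers that pair, so assume neither. Following the proof of Theorem~\ref{thm: asc sig =00003D> min sig}, I would induct along the well-ordering $\siglt$ on $\monsigset$, taking as hypothesis that every element of $I_{i+1}$ whose minimal signature is $\siglt\tau\sigvar_{i+1}$ has a standard representation with respect to $G$ with that signature bound --- which is exactly what the ascending-signature strategy supplies (cf.\ Lemmas~\ref{lem: sig(s) is signature unless sig-safe rep}--\ref{lem: sig-cancelling sig implies drop}, whose ``rewrite the top-cancellations'' move is available precisely because those cancellations have smaller signature).

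Fix the chosen $(\sigma\sigvar_{i+1},f)\in B$ and $t\in\monset$ with $t\sigma=\tau$. If $f=0$, then $\sigma\sigvar_{i+1}$, hence $\tau\sigvar_{i+1}=t\sigma\sigvar_{i+1}$, is the signature of a syzygy; so $\spol{p}{q}$ differs from a multiple of that syzygy by an element of smaller signature, and the induction hypothesis together with Lemma~\ref{lem: use the discovered non-trivial syzygies} closes this case. So assume $f\neq0$; then $\minsig(f)=\sigma\sigvar_{i+1}$ since $(\sigma\sigvar_{i+1},f)\in G$. By Proposition~\ref{pro: natural sigs}(C), $\tau\sigvar_{i+1}\in\sig(tf)$ and $tf$ is its own $\tau$-representation via the single term $t\cdot f$; by Corollary~\ref{cor: sig-safe red preserves sig}(B) (the common coefficient on $\tau\sigvar_{i+1}$ being $1$) the element $\spol{p}{q}-tf$ is either $0$ --- whence $\spol{p}{q}=t\cdot f$ and we are done --- or has minimal signature $\siglt\tau\sigvar_{i+1}$, so by the induction hypothesis it has a standard representation with that bound. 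Adding back $t\cdot f$ yields a representation of $\spol{p}{q}$ all of whose signature bounds $\lm(h_k)\minsig(g_k)$ are $\sigle\tau\sigvar_{i+1}$. When \rewritable\ instead retains a pair $(\tau\sigvar_{i+1},f,g)$ (the cases $p=f$ with $q\neq g$, or $p\neq f$ with such a retained pair present), the identical computation applies with $\spol{f}{g}$ --- the one $S$-polynomial of this signature that is actually reduced in line~\ref{line: sig-safe red of spoly}, whose semi-complete $\sigma$-reduction is appended to $G$ --- taking the role of $tf$.

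The step I expect to be the main obstacle is upgrading this representation to a genuinely \emph{standard} one, i.e.\ forcing $\lm(h_k)\lm(g_k)\le\lm(\spol{p}{q})$ rather than merely $\le\max(\lm(\spol{p}{q}),t\cdot\lm(f))$: when $t\cdot\lm(f)>\lm(\spol{p}{q})$ the term $tf$ overshoots the leading monomial and the resulting cancellation is not visible term by term --- which is precisely what makes ``standard representation'' stronger than ``representation''. I would circumvent this by \emph{reducing} instead of adding: run the semi-complete $\sigma$-reduction of $\spol{p}{q}$ modulo $G$ exactly as line~\ref{line: sig-safe red of spoly} would. Since each step strictly lowers the leading monomial, this halts at $0$ (done), or at some $\hat s$ with $\minsig(\hat s)\siglt\tau\sigvar_{i+1}$ (done, by the induction hypothesis composed with the reduction chain), or at a $\sigma$-irreducible $\hat s$ of signature still $\tau\sigvar_{i+1}$. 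In the last case I would compare $\hat s$ with whichever element of signature $\tau\sigvar_{i+1}$ the rule keeps alive --- the appended semi-complete reduction $r$ of $\spol{f}{g}$ when a pair $(\tau\sigvar_{i+1},f,g)$ was retained, and $t\cdot f$ otherwise --- noting that both have signature $\tau\sigvar_{i+1}$ with coefficient $1$, so their difference has smaller signature, and arguing as in Lemmas~\ref{lem: sig-cancelling sig implies drop} and~\ref{lem: criteria of arri, ggv suffice} that the leading monomial of that surviving element cannot exceed $\lm(\hat s)$ without contradicting the semi-completeness of line~\ref{line: sig-safe red of spoly}. Granting this, $\lm$ of the surviving element is $\le\lm(\hat s)\le\lm(\spol{p}{q})$, so adding it to the induction-hypothesis standard representation of the smaller-signature difference gives a standard representation of $\hat s$, and composing with the reduction chain, of $\spol{p}{q}$; this last inequality, and the book-keeping that makes ``at most one pair of each signature survives'' suffice, is the part I would expect to require the most care.
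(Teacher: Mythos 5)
Your opening move --- subtracting $tf$ from $\spol{p}{q}$ and noting the difference has smaller \altsig\ --- is essentially the paper's, which instead rewrites $up=tf+\sum h_k g_k$ and substitutes into $\spol{p}{q}=up-vq$. You correctly identify the real obstruction: $t\cdot\lm(f)$ may exceed $\lm(\spol{p}{q})$, so this gives a representation with the right \emph{signature} bound but not necessarily a \emph{standard} one. The paper does not dodge this by an $\lm$-inequality; it accepts that the rewritten representation $tf+\sum h_k g_k - vq$ may have overshooting leading terms, observes that every top-cancellation in it has \altsig\ $\sigle\tau\sigvar_{i+1}$, and then appeals to the same iterated-rewriting mechanism used in the proof of Theorem~\ref{thm: asc sig =00003D> min sig} --- a Buchberger-criterion-style chain where each top-cancellation is itself a scaled $S$-polynomial between members of $G$, eventually resolved by ascending signature. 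That step is where the proof actually lives, and the paper (being a sketch) compresses it into one sentence.

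Your proposed repair --- run a semi-complete $\sigma$-reduction to $\hat s$ and then show $\lm(\text{surviving element})\le\lm(\hat s)$ --- works in only one of the two cases. If a pair $(\tau\sigvar_{i+1},f,g)$ is retained, the surviving element is the appended remainder $r$, which is itself $\sigma$-irreducible; if $\lm(r)>\lm(\hat s)$ then $r-c\hat s$ has smaller signature and leading monomial $\lm(r)$, and its \als-repre\-sentation produces a reducer of $\lm(r)$ with small signature already present in $G$, contradicting the semi-completeness of $r$ --- this is the same move as in the proof of Lemma~\ref{lem: criteria of arri, ggv suffice}, and it is sound. But the lemma (and the paper's discussion immediately after it) explicitly permits discarding \emph{every} pair with signature $\tau\sigvar_{i+1}$: one may choose an $(\sigma\sigvar_{i+1},f)\in B$ for which no pair has first component $f$. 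In that case nothing of signature $\tau\sigvar_{i+1}$ is ever reduced or appended to $G$, the ``surviving element'' is only the \emph{virtual} multiple $tf$, and the contradiction argument evaporates: $tf$ is not the output of line~\ref{line: sig-safe red of spoly}, so finding a small-signature reducer of $\lm(tf)$ contradicts nothing ($\lm(h)\mid t\lm(f)$ does not imply $\lm(h)\mid\lm(f)$, so $f$'s own irreducibility is not violated). That is the concrete gap: you need the paper's ``rewrite-the-top-cancellations'' argument precisely in this nothing-retained case, not an $\lm$-inequality. Once you replace the final step by that argument, the rest of your structure (induction on $\siglt$, handling $f=0$ via Lemma~\ref{lem: use the discovered non-trivial syzygies}) is fine.
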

\begin{proof}
[(sketch)]Choose any $\left(\sigma\sigvar_{i+1},f\right)\in B$,
and let $\left(\mu\sigvar_{i+1},p\right),\left(\mu'\sigvar_{i+1},q\right)\in G$
such that $\left(\tau\sigvar_{i+1},p,q\right)\in P\cup S$. Let $t,u\in\monset$
such that $t\sigma=u\mu=\tau$. The signature of $up-tf$ is smaller
than $\tau$, so it has a standard representation with respect to
$G$; say $up-tf=\sum h_{k}g_{k}$. Then $\spol{p}{q}=up-vq=u\left(tf+\sum h_{k}g_{k}\right)-vq$.
All top-cancellations in this representation of $\spol{p}{q}$ are
of equal or smaller \altsig, so $\spol{p}{q}$ will have a standard
representation with respect to $G$ once line~\ref{line: pick spoly}
chooses $\sigma\sigvar_{i+1}\siggt\tau\sigvar_{i+1}$.
\end{proof}
Notice that Lemma~\ref{lem: rewritable criteria} requires only divisibility;
if there are no $\left(\tau\sigvar_{i+1},p,q\right)\in S$ such that
$p=f$, then we could discard all $\left(\tau\sigvar_{i+1},p,q\right)\in S$.
We thus have a {}``rewritable'' criterion:
\begin{lyxlist}{(RW)}
\item [{\rewritable}] For any $\tau\sigvar_{i+1}\in\monsigset$ select
$\left(\sigma\sigvar_{i+1},f\right)\in G$ such that $\sigma\mid\tau$;
discard any $\left(\tau\sigvar_{i+1},p,q\right)$ if $p\neq f$, or
if $p=f$ and we retain another $\left(\tau\sigvar_{i+1},f,g\right)\in S$
where $q\neq g$.
\end{lyxlist}
For simplicity's sake, we assume that we apply \rewritable\ only
in line~\ref{line: prune S}, but \nonminsig\ both there and in
line~\ref{line: prune P}.

\section{Known strategies}

\label{sec: past algorithms as strategies}This section sketches briefly
how Arri's algorithm, \ggv, and F5 can be viewed as strategies for
Algorithm~\ref{alg: sig-based GB computation}, distinguished by:
\begin{enumerate}
\item whether reduction is complete or semi-complete; and
\item how they prune $P$ and $S$.
\end{enumerate}
Space restrictions prevent us from going too far into each algorithm's
workings, or proving in detail the characterization of each as a strategy
for Algorithm~\ref{alg: sig-based GB computation}. However, the
reader can verify this by inspecting the relevant papers.

\subsection{Arri's algorithm}

Algorithm~\ref{alg: sig-based GB computation} is very close to Arri's
algorithm, which uses semi-complete reduction. Although \cite{ArriPerry2009}
presents this algorithm in non-incremental fashion, with a more general
way to choose the signatures, we consider it incrementally, with the
definition of signature as given here.

The algorithm maintains a list $G$ similar to that of Algorithm~\ref{alg: sig-based GB computation},
and discards $\spol{f}{g}$ if $f,g$ do not satisfy a definition
of a {}``normal pair''. This differs from the definition in~\cite{Fau02Corrected}:
\begin{defn}
\label{def: arri's normal pair}Any $f,g\in I_{i+1}$ are a \textbf{normal
pair} if $\spol{f}{g}=uf-cvg$ and
\begin{itemize}
\item for any $\left(\sigma\sigvar_{i+1},p\right)\in\left\{ \left(\minsig\left(f\right),f\right),\left(\minsig\left(g\right),g\right)\right\} $
there does not exist $\left(\tau\sigvar_{i+1},q\right)\in G$ and
$t\in\monset$ such that $t\tau=\sigma$ and $t\lm\left(q\right)=\lm\left(p\right)$;
\item $\minsig\left(uf\right)=u\cdot\minsig\left(f\right)$ and $\minsig\left(vg\right)=v\cdot\minsig\left(g\right)$;
and
\item $\minsig\left(uf\right)\neq\minsig\left(vg\right)$.
\end{itemize}
\end{defn}
In addition to $G$, Arri's algorithm maintains a list $L$ of leading
monomials used to prune $P$ (there called $B$). These correspond
to known syzygies; whenever $s$ \als-reduces to zero, the monomial
part of its \altsig\ is added to $L$.

We can characterize this as a strategy for Algorithm~\ref{alg: sig-based GB computation}
in the following way. The first bullet of Definition~\ref{def: arri's normal pair}
implies the \als-redundant property (Lemma~\ref{lem: criteria of arri, ggv suffice}).
To implement the second bullet,~\cite{ArriPerry2009} counsels initializing
$L$ to $\left\{ \lm\left(f\right):f\in F_{i}\right\} $ and adding
$\sigma$ to $L$ if the $\sigma$-reduction of $r$ concludes with~0.
This implements Proposition~\ref{pro: faugere's criterion} and Lemma~\ref{lem: use the discovered non-trivial syzygies}.
In addition,~\cite{ArriPerry2009} points out that for any fixed
\altsig\ one should keep a polynomial of minimal leading monomial;
after all, $\sigma$-reduction occurs when the leading monomial decreases
and the \altsig\ is preserved. Thus, the algorithm discards any $S$-poly\-nomial
if another polynomial of the same \altsig\ has lower leading monomial.
This implements Lemma~\ref{lem: rewritable criteria}. So, Arri's
algorithm discards $\left(\sigma\sigvar_{i+1},p,q\right)$ if either
of the following holds:
\begin{lyxlist}{(AM)}
\item [{\minarr}] for some $g\in F_{i}$, $\lm\left(g\right)\mid\sigma$,
or for some $\left(\tau\sigvar_{i+1},r\right)\in G$, $\tau\mid\sigma$
and $r=0$; or
\item [{\rewarr}] there exist $\left(\tau\sigvar_{i+1},g\right)\in G$
and $t\in\monset$ such that $t\tau=\sigma$ and $\lm\left(tg\right)<\lm\left(\spol{p}{q}\right)$,
or there exist $\left(\tau\sigvar_{i+1},f,g\right)\in S\cup P$ and
$t\in\monset$ such that $t\tau=\sigma$ and $\lm\left(t\spol{f}{g}\right)<\lm\left(\spol{p}{q}\right)$.
\end{lyxlist}
Notice that \rewarr\ checks divisibility of $\sigma$, not equality.
\begin{prop}
\label{pro: arri computes nat sigs}Arri's algorithm implements Algorithm
\ref{alg: sig-based GB computation} with semi-complete reduction:
\minarr\ implements \nonminsig\ and \rewarr\ implements \rewritable.
\end{prop}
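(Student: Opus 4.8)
The proposition has two substantive halves — that \minarr\ is a sound specialization of \nonminsig, and that \rewarr\ is a sound specialization of \rewritable\ — after which ``Arri's algorithm is Algorithm~\ref{alg: sig-based GB computation} with semi-complete reduction'' follows by matching the remaining features (the ascending-signature loop, the list $G$, oriented pairs, the mode of reduction, and the third bullet of Definition~\ref{def: arri's normal pair}) against the corresponding lines of Algorithm~\ref{alg: sig-based GB computation}. I would do the two halves first and the matching last.

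\textbf{\minarr\ implements \nonminsig.} The tool I would set up at the start is the dictionary: for $s\in I_{i+1}$ with $\sigma\sigvar_{i+1}\in\sig(s)$, one has $\sigma\sigvar_{i+1}\neq\minsig(s)$ \emph{iff} $\sigma\sigvar_{i+1}$ is the \altsig\ of some syzygy of $F_{i+1}$ --- one direction subtracts a representation of $s$ realizing $\minsig(s)$ from one realizing $\sigma\sigvar_{i+1}$ and reads off a syzygy of \altsig\ $\sigma\sigvar_{i+1}$, the other subtracts a scalar multiple of the syzygy from the given representation of $s$ to cancel its $\sigvar_{i+1}$-leading term and so lower the signature. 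Now \nonminsig\ is itself valid because, by Lemma~\ref{lem: sig(s) is signature unless sig-safe rep}, any pair the algorithm creates whose prepended signature is not the minimal signature of its $S$-poly\-nomial already has a \als-representation w.r.t.\ $G$ and need not be processed; so it suffices to show the hypothesis of \minarr\ forces $\sigma\sigvar_{i+1}$ to be a syzygy signature. First alternative, $\lm(g)\mid\sigma$ for some $g\in F_i$: writing $g=h_1f_1+\cdots+h_if_i$, the element $g\sigvar_{i+1}-f_{i+1}(h_1\sigvar_1+\cdots+h_i\sigvar_i)$ is a syzygy of $F_{i+1}$ with \altsig\ $\lm(g)\sigvar_{i+1}$ (this is Faug\`ere's criterion, Proposition~\ref{pro: faugere's criterion}), and multiplying through by $\sigma/\lm(g)$ and using multiplicativity of signatures (Proposition~\ref{pro: natural sigs}) makes $\sigma\sigvar_{i+1}$ a syzygy signature. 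Second alternative, $(\tau\sigvar_{i+1},r)\in G$ with $\tau\mid\sigma$ and $r=0$: Lemma~\ref{lem: use the discovered non-trivial syzygies} says $\tau\sigvar_{i+1}$ is a syzygy signature, and I scale up by $\sigma/\tau$. Either way the pair may be dropped, which is exactly \nonminsig; and maintaining Arri's list $L$ as $\{\lm(f):f\in F_i\}$ together with the \altsig-monomials of zero-reductions is precisely the management of $\Syz$ prescribed by Proposition~\ref{pro: faugere's criterion} and Lemma~\ref{lem: use the discovered non-trivial syzygies}, with the first two bullets of Definition~\ref{def: arri's normal pair} thereby subsumed.

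\textbf{\rewarr\ implements \rewritable.} The underlying statement is Lemma~\ref{lem: rewritable criteria}: for a fixed $\tau\sigvar_{i+1}$ we may choose \emph{any} $(\sigma'\sigvar_{i+1},f)$ in $B=\{(\sigma'\sigvar_{i+1},f)\in G:\sigma'\mid\tau\}$ and discard every critical pair of that signature whose leading generator is not $f$, keeping at most one of those whose leading generator is $f$. Arri's recipe is to keep, at each signature, an object of least leading monomial, so the plan is to show each discard effected by \rewarr\ agrees with some admissible choice of representative and is therefore licensed by Lemma~\ref{lem: rewritable criteria}. When \rewarr\ fires via its first clause --- $(\tau\sigvar_{i+1},g)\in G$, $t\tau=\sigma$, $\lm(tg)<\lm(\spol{p}{q})$ --- then $(\tau\sigvar_{i+1},g)\in B$ for the target $\sigma\sigvar_{i+1}$, and with $g$ chosen as representative, Lemma~\ref{lem: rewritable criteria} discards $(\sigma\sigvar_{i+1},p,q)$ since a pair built on $g$ at signature $\sigma$ contributes leading monomial $\lm(tg)$, strictly below $\lm(\spol{p}{q})$. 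When \rewarr\ fires via its second clause --- $(\tau\sigvar_{i+1},f,g)\in S\cup P$, $t\tau=\sigma$, $\lm(t\spol{f}{g})<\lm(\spol{p}{q})$ --- note $\tau\mid\sigma$ forces the pending pair to be handled no later than $(\sigma\sigvar_{i+1},p,q)$; its semi-complete reduction returns $(\tau\sigvar_{i+1},r')$ with $\lm(r')\leq\lm(\spol{f}{g})$. If $r'=0$ then $\tau$ enters $L$ and \minarr\ already removes $(\sigma\sigvar_{i+1},p,q)$; otherwise $(\tau\sigvar_{i+1},r')$ joins $G$ and, scaled to signature $\sigma$, contributes leading monomial $\leq\lm(t\spol{f}{g})<\lm(\spol{p}{q})$, putting us back in the first clause. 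The remaining requirement of \rewritable\ --- at most one pair of a given signature in $S$ --- is the case $t=1$ of the second clause, with ties resolved arbitrarily as \rewritable\ permits.

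\textbf{Finishing, and the main obstacle.} Wrapping up then amounts to checking that semi-complete reduction (used in \cite{ArriPerry2009}) is the reduction mode Algorithm~\ref{alg: sig-based GB computation} requires, that the third bullet of Definition~\ref{def: arri's normal pair}, $\minsig(uf)\neq\minsig(vg)$, is exactly the test of line~\ref{line: don't create pairs for cancelling ves}, and that Arri's $G$, oriented pairs and ascending-signature loop are the ascending-signature scaffolding of Algorithm~\ref{alg: sig-based GB computation}. The delicate step is the second clause of \rewarr: unlike \rewritable\ and the first clause, it discards a pair by appeal to another pair still in $S\cup P$, whose reduct is not yet in $G$, whereas Lemma~\ref{lem: rewritable criteria} quantifies only over $G$. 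Making this rigorous means following that pending pair through its semi-complete reduction (confirming its reduct keeps the advertised signature, or else collapses to zero and is caught by \minarr), invoking well-foundedness of $<$ on leading monomials to control chains of such deferrals, and verifying that whenever the pending pair is itself discarded --- by \minarr\ or \rewarr --- the same fate is forced on $(\sigma\sigvar_{i+1},p,q)$, so that no pair survives spuriously and no two pairs of equal signature discard each other. That bookkeeping is where I expect the real work to lie.
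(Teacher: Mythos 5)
Your proposal follows the same chain of lemmas that the paper itself invokes in the paragraph immediately preceding this proposition, which is the closest the paper comes to a proof: the paper marks this statement as a ``Proposition'' (``either trivial or proved elsewhere'') and relies on the preceding informal characterization together with an invitation to inspect \cite{ArriPerry2009}. Concretely, the paper says the management of Arri's list $L$ implements Proposition~\ref{pro: faugere's criterion} and Lemma~\ref{lem: use the discovered non-trivial syzygies}, that the minimal-leading-monomial rule implements Lemma~\ref{lem: rewritable criteria}, and that the first bullet of Definition~\ref{def: arri's normal pair} implements the \als-redundancy check via Lemma~\ref{lem: criteria of arri, ggv suffice} --- and you cite exactly the same results. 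Where you add value is in actually working out the syzygy-scaling argument for \minarr\ $\Rightarrow$ \nonminsig\ and the case analysis for \rewarr\ $\Rightarrow$ \rewritable\ rather than asserting them, and in matching the third bullet of Definition~\ref{def: arri's normal pair} to line~\ref{line: don't create pairs for cancelling ves}; you do omit an explicit remark that the first bullet of Definition~\ref{def: arri's normal pair} corresponds to the \als-redundancy test at line~\ref{line: need to create crit pairs?}, though the proposition's formal content only concerns \minarr\ and \rewarr, so this is harmless. The genuinely delicate point you flag at the end --- that the second clause of \rewarr\ discards a pair by appeal to another pair still sitting in $S\cup P$, whereas Lemma~\ref{lem: rewritable criteria} quantifies only over $G$, so one must track the pending pair through its eventual reduction (or its own discard) to validate the deferral --- is a real gap that the paper glosses over entirely. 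Your proposal is correct in approach, more explicit than the paper, and honest about the one step that needs further bookkeeping.
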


\subsection{\label{sub:GGV}\ggv}

Although \ggv\ can be used to compute the colon ideal, we consider
it only in the context of computing a Gr\"obner basis. Thus, we are
really looking at a special case of \ggv.

\ggv\ maintains two lists of polynomials, $U$ and $V$. The polynomials
of $V$ are the elements of the basis. The polynomials of $U$ are
paired with those of $V$ such that
\begin{itemize}
\item if $v_{j}\in F_{i}$, then $u_{j}=0$;
\item if $v_{j}=f_{i+1}$, then $u_{j}=1$; and
\item if $v_{j}=ct_{k}v_{k}\pm dt_{\ell}v_{\ell}$ for some $c,d\in\field$,
$t_{k},t_{\ell}\in\monset$, and $v_{k},v_{\ell}\in V$, then $u_{j}=ct_{k}u_{k}\pm dt_{\ell}u_{\ell}$.
\end{itemize}
In addition, $S$-poly\-nomials and reductions are computed in such
a way that $\lm\left(u_{j}\right)$ is invariant for all $j$: \ggv\
computes $v_{j}-ctv_{k}$ only if $\lm\left(u_{j}\right)>t\lm\left(u_{k}\right)$
or $\lm\left(u_{j}\right)=t\lm\left(u_{k}\right)$ but $\lc\left(u_{j}\right)\neq\lc\left(cu_{k}\right)$.
Thus, if $u_{j}\neq0$, then $u_{j}\sigvar_{i+1}\in\sig\left(v_{j}\right)$.

The algorithm maintains another list $H$ of monomials that is initialized
with the leading monomials of all $f\in F_{i}$, and expanded during
the course of the algorithm by adding $\lm\left(u_{j}\right)$ whenever
$v_{j}$ reduces to zero. It does not compute an $S$-poly\-nomial
for $v_{j}$ and $v_{k}$ if:
\begin{lyxlist}{(GM)}
\item [{\divH}] $\max\left(\frac{t_{v_{j},v_{k}}}{t_{v_{j}}}\cdot u_{j},\frac{t_{v_{j},v_{k}}}{t_{v_{k}}}\cdot u_{k}\right)$
is divisible by a $t\in H$.
\end{lyxlist}
In addition, if every possible reduction of $v_{j}$ is by some $v_{k}$
such that $\lt\left(v_{j}\right)=dt\lt\left(v_{k}\right)$ and $\lt\left(u_{j}\right)=dv\lt\left(u_{k}\right)$,
then $v_{j}$ is \textbf{super top-reducible}, and \cite{GGV10} abstains
from generating critical pairs for $\spol{p}{q}$ if:
\begin{lyxlist}{(GS)}
\item [{\suptopred}] either $p$ or $q$ is super top-reducible.
\end{lyxlist}
Criterion~\divH\ implements Proposition~\ref{pro: faugere's criterion}
and Lemma~\ref{lem: use the discovered non-trivial syzygies}, while
\suptopred\ implies the \als-redundant property (Lemma~\ref{lem: criteria of arri, ggv suffice}).
\ggv\ offers no implementation of \rewritable\ beyond, {}``store
only one {[}pair{]} for each distinct {[}\altsig{]}''. \emph{Which}
pair is left somewhat ambiguous, but we will see that the choice is
important.
\begin{prop}
\label{pro: ggv computes nat sigs}\ggv\ implements Algorithm~\ref{alg: sig-based GB computation}
with complete reduction: \divH\ implements \nonminsig.
\end{prop}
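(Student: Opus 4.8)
The plan is to follow the pattern that produced Proposition~\ref{pro: arri computes nat sigs}: identify a run of \ggv\ with a run of Algorithm~\ref{alg: sig-based GB computation}, taken in the reformulation without oriented pairs and without the degree-batched inner loop of line~\ref{line: create S} (the form the text already flags as close to \ggv), after matching the two bookkeeping schemes. The dictionary I would use is: the paired lists $U,V$ of \ggv\ play the role of the list $G$, with $\left(u_j,v_j\right)$ standing for $\left(\lm\left(u_j\right)\sigvar_{i+1},v_j\right)$ when $u_j\neq0$ and for $\left(\sigvar_j,v_j\right)$ (appropriate $j\leq i$) when $u_j=0$, while the monomial list $H$ stands for $\Syz$. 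First I would check the invariant that $u_j\neq0$ implies $\lm\left(u_j\right)\sigvar_{i+1}\in\sig\left(v_j\right)$: this is immediate from the recursion $u_j=ct_ku_k\pm dt_\ell u_\ell$ via Proposition~\ref{pro: natural sigs}, and from Corollary~\ref{cor: sig-safe red preserves sig} for the reduction steps, since \ggv\ performs $v_j-ctv_k$ precisely when $\lm\left(u_j\right)>t\lm\left(u_k\right)$, or $\lm\left(u_j\right)=t\lm\left(u_k\right)$ with $\lc\left(u_j\right)\neq\lc\left(cu_k\right)$ --- exactly hypotheses~(A) and~(B) of that corollary. Because \ggv\ runs each such reduction to completion and processes pairs by ascending signature, it performs \emph{complete} $\sigma$-reduction in the sense of Definition~\ref{def: sig-safe reduction}, not merely the semi-complete reduction of Arri's algorithm; this settles the complete-reduction assertion of the proposition.

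Next I would verify that $H=\Syz$ is initialized and updated within the options of Section~\ref{sec: prune P, S}: \ggv\ sets $H=\left\{\lm\left(f\right):f\in F_i\right\}$ (Proposition~\ref{pro: faugere's criterion}) and adjoins $\lm\left(u_j\right)$ whenever $v_j$ reduces to $0$ (Lemma~\ref{lem: use the discovered non-trivial syzygies}), so every $t\in H$ has $t\sigvar_{i+1}$ a \altsig\ of a syzygy. For criterion~\divH: forming and orienting the $S$-polynomial as in lines~\ref{line: identify new vestige}--\ref{line: new crit pair} and invoking Corollary~\ref{cor: sig-safe red preserves sig}(A), the monomial $\sigma=\max\left(\frac{t_{v_j,v_k}}{t_{v_j}}\lm\left(u_j\right),\frac{t_{v_j,v_k}}{t_{v_k}}\lm\left(u_k\right)\right)$ appearing in \divH\ satisfies $\sigma\sigvar_{i+1}\in\sig\left(\spol{v_j}{v_k}\right)$ and is exactly the \altsig\ Algorithm~\ref{alg: sig-based GB computation} attaches to that pair. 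If \divH\ triggers, say $t\mid\sigma$ with $t\in H$ and $\sigma=ut$, then subtracting from $\spol{v_j}{v_k}$ a suitably scaled $u$-multiple of a syzygy of \altsig\ $t\sigvar_{i+1}$ --- the manipulation from the sketch of Lemma~\ref{lem: use the discovered non-trivial syzygies}, via Proposition~\ref{pro: natural sigs}(B)--(C) --- yields the same polynomial with strictly smaller \altsig, whence $\sigma\sigvar_{i+1}\neq\minsig\left(\spol{v_j}{v_k}\right)$ and the pair is discarded by \nonminsig\ too. Thus \divH\ discards a subset of what \nonminsig\ permits discarding, which is the sense in which \divH\ implements \nonminsig.

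To close, \suptopred\ implies the \als-redundant property by Lemma~\ref{lem: criteria of arri, ggv suffice}, so all the pruning carried out by \ggv\ is subsumed by pruning that Algorithm~\ref{alg: sig-based GB computation} may perform: \nonminsig\ and the $\Syz$-divisibility test, both realized by \divH, together with the rejection of \als-redundant pairs, realized by \suptopred. Hence a \ggv\ run is a legitimate run of Algorithm~\ref{alg: sig-based GB computation} with complete reduction, and correctness and termination are inherited from Theorems~\ref{thm: asc sig =00003D> min sig} and~\ref{thm: sig-based alg terms correctly}. I expect the only real difficulty to be bookkeeping rather than mathematics: one must align the orientation and field-coefficient conventions that \ggv\ uses for $S$-polynomials and reductions with those of Algorithm~\ref{alg: sig-based GB computation} carefully enough to be sure the invariant $\lm\left(u_j\right)\sigvar_{i+1}\in\sig\left(v_j\right)$ --- and hence the identification of $\max\left(\cdots\right)\sigvar_{i+1}$ as the pair's natural signature --- survives every step, including the level (type~(B)) reductions that distinguish complete from semi-complete reduction.
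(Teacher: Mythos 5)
The paper itself offers no proof of this Proposition --- the parenthetical after Proposition~\ref{pro: natural sigs} declares that propositions are ``either trivial or proved elsewhere,'' and the closest thing to a justification is the running commentary in Section~\ref{sub:GGV}, which asserts that \divH\ implements Proposition~\ref{pro: faugere's criterion} and Lemma~\ref{lem: use the discovered non-trivial syzygies}, and that \suptopred\ implies \als-redundancy via Lemma~\ref{lem: criteria of arri, ggv suffice}. Your argument spells out exactly that sketch: the dictionary $(U,V)\leftrightarrow G$ and $H\leftrightarrow\Syz$, the invariant $\lm(u_j)\sigvar_{i+1}\in\sig(v_j)$ via Proposition~\ref{pro: natural sigs} and Corollary~\ref{cor: sig-safe red preserves sig}, type~(A) plus type~(B) reductions being exactly complete $\sigma$-reduction, and a \divH-discard yielding an \nonminsig-discard by subtracting an appropriate multiple of a syzygy of \altsig\ $t\sigvar_{i+1}$; so your proof is correct and takes the same route the paper relies on implicitly. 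The only spot worth tightening is the claim that the $\max$ appearing in \divH\ is ``exactly the \altsig\ Algorithm~\ref{alg: sig-based GB computation} attaches'' to the pair: this invokes Proposition~\ref{pro: natural sigs}(A) and so tacitly assumes the two candidate signatures are not level, which is harmless because both \ggv\ and line~\ref{line: don't create pairs for cancelling ves} of Algorithm~\ref{alg: sig-based GB computation} drop the pair precisely in that case.
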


\subsection{F5}

\label{sub:F5}As explained in the introduction, we use the F5C variant
of F5~\cite{EderPerry09}. In fact, we actually use a simplified
version of F5; we describe the differences below.

F5 maintains several lists $G_{1},\ldots,G_{i+1}\subset\monsigset\times R$;
for $j=1,\ldots,i$, each $G_{j}$ is a Gr\"obner basis of $\ideal{f_{1},\ldots,f_{j}}$.
Whenever a $\left(\sigma\sigvar_{i+1},r\right)$ concludes $\sigma$-reduction,
$\left(\sigma,r\right)$ is added to the $\left(i+1\right)$-st list
in a list named $\textit{Rules}$.

F5 discards $\left(\sigma\sigvar_{i+1},p,q\right)$ if:
\begin{lyxlist}{(FM)}
\item [{\prisyz}] for some $g\in F_{i}$, $\lm\left(g\right)\mid\sigma$,
or
\item [{\faurew}] there exists $\left(\tau,g\right)\in\mathit{Rules}_{i+1}$,
not \als-redundant, such that $g$ was computed after $p$ and $\tau\mid\sigma$.
\end{lyxlist}
Notice that \faurew, like \rewarr, checks divisibility of $\sigma$,
not equality.
\begin{prop}
\label{pro: f5 is implementation of sigbased alg}The simplified F5
described here implements Algorithm~\ref{alg: sig-based GB computation}
with semi-complete reduction: \prisyz\ implements \nonminsig, and
\faurew\ implements \rewritable.
\end{prop}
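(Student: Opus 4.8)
The plan is to verify, in parallel with Propositions~\ref{pro: arri computes nat sigs} and~\ref{pro: ggv computes nat sigs}, three claims: that the simplified F5 is structurally an instance of Algorithm~\ref{alg: sig-based GB computation}; that \prisyz\ is a legitimate realization of \nonminsig; and that \faurew\ is a legitimate realization of \rewritable. The first is essentially inspection of the F5 main loop (see~\cite{Fau02Corrected,EderPerry09}): one checks that F5 tags each critical pair with its signature $\sigma\sigvar_{i+1}$ and chooses pairs first by ascending degree of $\sigma$ (line~\ref{line: create S}) and then by ascending $\sigma\sigvar_{i+1}$ (line~\ref{line: pick spoly}); that F5's top-reduction-only, signature-preserving reduction is exactly a semi-complete $\sigma$-reduction in the sense of Definition~\ref{def: sig-safe reduction} --- it never tail-reduces, hence performs no type-(A) reduction, but it does the type-(B) coefficient adjustment --- so that semi-completeness is precisely what it guarantees; and that $\mathit{Rules}_{i+1}$ records the entries appended to $G$ in order of computation, with zero and \als-redundant results handled (excluded from the output, and, through \faurew, from pair generation) as in lines~\ref{line: need to create crit pairs?}, \ref{line: create pairs w/nonzero polys}, and~\ref{line: return}.

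For \prisyz: suppose it discards $\left(\sigma\sigvar_{i+1},p,q\right)$ because $\lm\left(g\right)\mid\sigma$ for some $g\in F_{i}$, and write $\sigma=t\lm\left(g\right)$. By Proposition~\ref{pro: faugere's criterion}, $\lm\left(g\right)\sigvar_{i+1}$ is a \altsig\ of a syzygy of $F_{i+1}$ (namely of the principal syzygy built from $g$ and $f_{i+1}$), so by Proposition~\ref{pro: natural sigs}(C) so is $\sigma\sigvar_{i+1}=t\lm\left(g\right)\sigvar_{i+1}$; that is, there are $h_{1},\ldots,h_{i+1}\in R$ with $\sum h_{k}f_{k}=0$ and $\lm\left(h_{i+1}\right)\sigvar_{i+1}=\sigma\sigvar_{i+1}$. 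Subtracting this syzygy from any expansion of $\spol{p}{q}$ whose leading term is $\sigma\sigvar_{i+1}$ yields an expansion of strictly smaller signature, so $\minsig\left(\spol{p}{q}\right)\siglt\sigma\sigvar_{i+1}$ and \nonminsig\ discards the pair as well. That the discarded sets coincide --- every pair of non-minimal signature is caught --- is exactly what the proof cited at Proposition~\ref{pro: faugere's criterion} establishes, using that $G_{i}$ is already a Gr\"obner basis.

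For \faurew: observe that $\mathit{Rules}_{i+1}$ is the sublist of $G$ of pairs $\left(\sigma\sigvar_{i+1},g\right)$ listed in order of computation, and use it to define the \rewritable\ selection: for $\tau\sigvar_{i+1}\in\monsigset$, let $\left(\sigma\sigvar_{i+1},f\right)$ be the last non-\als-redundant entry of $\mathit{Rules}_{i+1}$ with $\sigma\mid\tau$. Since the pair $\left(\sigma\sigvar_{i+1},p,q\right)$ was built from a non-redundant $\left(\tau_{p}\sigvar_{i+1},p\right)\in G$ with $\tau_{p}\mid\sigma$, the entry for $p$ is itself a candidate for this selection; hence the chosen $f$ is computed no earlier than $p$, and \faurew\ discards $\left(\sigma\sigvar_{i+1},p,q\right)$ --- i.e.\ some non-redundant rule with signature dividing $\sigma$ was computed after $p$ --- precisely when $f\neq p$, which is the first clause of \rewritable. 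The second clause (two pairs of equal signature $\tau$) is covered because line~\ref{line: prune S} re-prunes $S$ against $G$ at the top of every inner iteration: once one of the two is reduced and its signature-preserving result is appended to $G=\mathit{Rules}_{i+1}$, that new entry has signature $\tau$ and is later than $f$, so \faurew\ discards the other. Soundness of all these discards is then immediate from Lemma~\ref{lem: rewritable criteria}.

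The main obstacle is this last matching. One must check that the F5 selection above is well defined and, crucially, \emph{consistent across all pairs sharing a signature} --- which rests on F5's rule order being nothing but insertion order, a total order, ruling out any circular ``$p$ rewrites $q$ rewrites $p$'' --- and that restricting to non-\als-redundant rules loses no usable rewriter, since by Lemma~\ref{lem: criteria of arri, ggv suffice} any \als-redundant rule has a non-redundant predecessor whose signature still divides. One should also note the borderline case in which $p$ has itself become \als-redundant by the time its pair is pruned, and verify that both criteria then discard the pair in any event. Everything else is either inspection of the F5 pseudocode or a direct appeal to the lemmas of Section~\ref{sub: sig-based algs}.
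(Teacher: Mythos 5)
The paper gives no proof of this proposition --- the parenthetical after the first Proposition warns that propositions are ``either trivial or proved elsewhere,'' and Section~\ref{sec: past algorithms as strategies} explicitly defers detailed verification of the characterizations to the cited papers --- so there is nothing to compare against; I can only assess your argument on its own terms. Your decomposition (structural match, then criterion-by-criterion soundness) is sensible, but the semi-completeness step contains a real error. You write that F5 ``never tail-reduces, hence performs no type-(A) reduction, but it does the type-(B) coefficient adjustment.'' This conflates the (A)/(B) distinction of Corollary~\ref{cor: sig-safe red preserves sig} with top- versus tail-reduction, and then gets the assignment backwards. Both (A) and (B) there are \emph{top}-reductions (the hypothesis is $at\lt\left(q\right)=\lt\left(p\right)$); what distinguishes them is the \emph{signature} of the reducer: strictly smaller in (A), level in (B). F5 performs type-(A) reductions and refuses type-(B) ones --- a reducer whose scaled signature is level with the current one is exactly what F5's signature-preservation rule excludes --- whereas \ggv\ does both, as the paper's reduction condition ``$\lm\left(u_{j}\right)>t\lm\left(u_{k}\right)$ or level with differing leading coefficients'' shows. ``Semi-complete'' means type~(A) is exhausted while type~(B) may remain; F5 lands precisely there \emph{because} it exhausts (A) and never does (B). As written, your characterization describes neither F5's reduction nor semi-completeness, and the conclusion is reached by an incorrect route.

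There is also an overclaim in the \prisyz\ step: you assert the discarded sets coincide (``every pair of non-minimal signature is caught''). That is false and is exactly the content of Fact~\ref{fac: comparison of minsig} and the motivation for \faurewtoo\ in the conclusion: \prisyz\ detects only the principal syzygies of $F_{i}$ against $f_{i+1}$ and misses those arising from zero reductions, which \divH\ and \minarr\ do catch. What the proposition asserts, and what Lemma~16 of~\cite{EderPerry09} supplies, is \emph{soundness} --- every \prisyz\ discard is one \nonminsig\ would make --- not coincidence of the discarded sets. The \faurew\ paragraph is the most solid part: reading \faurew\ as the instance of \rewritable\ whose selector is the most recently computed non-\als-redundant rule with dividing signature is correct, and re-pruning $S$ after each append is the right mechanism for equal-signature ties; but note that this mechanism silently presumes the freshly appended rule is nonzero and non-\als-redundant (otherwise \faurew\ ignores it), a borderline case you flag but do not actually close.
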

As noted, the F5 described here is simpler than~\cite{Fau02Corrected},
where:
\begin{itemize}
\item $S$-poly\-nomials of minimal degree are not computed in any particular
order (but the code of~\cite{EderPerry09} proceeds by ascending
lcm rather than ascending \altsig);
\item if $\spol{p}{q}=up-cvq$, then Criteria \prisyz\ and \faurew\ are
applied not only to $up$ but to $vq$; in addition, for any potential
$\sigma$-reduction $r-tg$, the criteria are used to reject some
$tg$.
\end{itemize}
Omitting these does not represent a significant difference from the
original algorithm. In fact, descriptions of F5 by ascending signature
have been around for some time~(\cite{AlbrechtPerryF45,Zobnin_F5Journal});
the second bullet can be viewed an optimization that makes sense in
an F4-style implementation, such as~\cite{AlbrechtPerryF45}.

There is one significant difference: the original F5 would not check
for \als-redundant polynomials. In view of this, when we view F5
as a strategy for Algorithm~\ref{alg: sig-based GB computation},
we will include the \als-redundant criterion; but in Section~\ref{sec:Experimental-results}
we will look at F5 with and without this criterion.

That said, in Section~\ref{sec:Experimental-results} we will look
at both the simpler F5 described here, and an implementation of the
original.

\section{Comparison of the algorithms}

\label{sec: Comparison}The thrust of Section~\ref{sec: past algorithms as strategies}
was to show that Arri's algorithm, \ggv, and a simplified F5 can
be viewed as implementations of Algorithm~\ref{alg: sig-based GB computation}.
Section~\ref{sub: theoretical comparison}, by contrast, compares
the three algorithms as their authors originally defined them, using
a strictly logical comparison of which critical pairs are discarded,
without regard to timings. Nevertheless, we retain the notation of
Algorithm~\ref{alg: sig-based GB computation}.

Section~\ref{sec:Experimental-results} compares the three algorithms
both ways: as implementations to, or {}``plugins'' for, Algorithm~\ref{alg: sig-based GB computation},
and for \ggv\ and F5 as standalone implementations. The results between
the approaches (plugin vs. original) do not differ significantly,
but provide both surprising results and additional insights.

\subsection{\label{sub: theoretical comparison}Logical comparison of the algorithms}

In this section we consider carefully how the criteria of Arri's algorithm,
\ggv, and the simplified F5 overlap. We begin with complete vs.~semi-complete
reductions:
\begin{fact}
\label{fact: additional reductions for GGV }\ggv\ reduces using
some polynomials that F5 and Arri's algorithm do not.\end{fact}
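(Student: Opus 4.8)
The plan is to exhibit the discrepancy structurally rather than by a single numerical example, and then (optionally) point to one small system where it actually occurs. The key observation is that \ggv\ performs \emph{complete} reduction (Proposition \ref{pro: ggv computes nat sigs}), whereas both Arri's algorithm and the simplified F5 perform only \emph{semi-complete} reduction (Propositions \ref{pro: arri computes nat sigs}, \ref{pro: f5 is implementation of sigbased alg}). By Definition \ref{def: sig-safe reduction}, a semi-complete $\sigma$-reduction stops once no type-(A) reduction of Corollary \ref{cor: sig-safe red preserves sig} is possible, i.e.\ once every candidate reducer $g$ with $\lm(g)\mid\lm(r)$ satisfies $t\cdot\minsig(g)\sigge\sigma\sigvar_{i+1}$, where $t\lm(g)=\lm(r)$. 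A complete reduction, by contrast, continues to perform type-(B) reductions, and \ggv\ in fact keeps reducing lower terms as well: it computes $v_j-ctv_k$ whenever $\lm(u_j)>t\lm(u_k)$, which can hold for reducers acting on trailing (non-leading) terms of $v_j$. So \ggv\ will in general use reducers $v_k$ whose signature contribution $t\lm(u_k)\sigvar_{i+1}$ is \emph{strictly smaller} than $\sigma\sigvar_{i+1}$ to cancel terms of $\spol{p}{q}$ below the leading term; F5 and Arri's algorithm, computing only $\sigma$-reductions directly (as noted in the bullet following Algorithm \ref{alg: sig-based GB computation}), never touch those lower terms at all.

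First I would make precise the claim that ``$F5$ and Arri's algorithm do not reduce using those polynomials.'' Both algorithms only ever apply a reducer $g$ to $r$ when the resulting step is a $\sigma$-reduction in the sense of Definition \ref{def: sig-safe reduction}; in particular the reducer is always applied so as to cancel the \emph{current} leading term of the partially reduced polynomial, and the step is rejected unless it is $\sigma$-safe. Hence if $g$ would only ever cancel a trailing term of $\spol{p}{q}$ (never its leading term, at any stage of the reduction), then neither algorithm ever multiplies $g$ by anything and subtracts it. Then I would observe that \ggv, tracking full polynomials $v_j$ and keeping $\lm(u_j)$ invariant, does exactly such subtractions: this is immediate from its reduction condition $\lm(u_j)\ge t\lm(u_k)$, which permits reduction of any term of $v_j$ as long as the $u$-signature does not grow.

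The main obstacle is that the statement, read literally, asserts an \emph{actual} occurrence, not merely a theoretical possibility: one must be sure the phenomenon is not vacuous, i.e.\ that there really is some ideal on which \ggv\ reduces with a polynomial the others never use. I expect this to be handled by citing a concrete small example — the natural candidates are the benchmark systems used later in Section \ref{sec:Experimental-results} (e.g.\ a Katsura-type or cyclic system), where one checks by direct computation that \ggv\ performs a tail-reduction of some $\spol{p}{q}$ using a basis element whose signature is too small to be a valid $\sigma$-reducer, so that F5 and Arri's algorithm leave that tail term in place. The remaining verification — that the reducer in question genuinely never serves as a leading-term $\sigma$-reducer for that $S$-polynomial in the other two algorithms — is a routine check against Definition \ref{def: sig-safe reduction} and the criteria \rewarr, \faurew, and I would not belabor it. Everything else is bookkeeping with the definitions already established, so apart from pinning down the witnessing example the argument is short.
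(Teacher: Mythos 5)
Your proposal opens correctly — you cite the complete versus semi-complete distinction, which is exactly the paper's one-sentence proof — but then it veers off in a direction the paper's definitions do not support. In Definition~\ref{def: sig-safe reduction} and Corollary~\ref{cor: sig-safe red preserves sig}, \emph{both} complete and semi-complete $\sigma$-reductions are chains of \emph{top-reductions}: the hypothesis $at\lt(q)=\lt(p)$ is baked into Corollary~\ref{cor: sig-safe red preserves sig}, so every reduction step in either case cancels the leading term. The difference the paper appeals to is solely this: semi-complete stops once no type~(A) top-reducer remains, whereas complete also performs type~(B) top-reductions — those where the reducer's signature is \emph{level} with the working signature (same monomial and index) but has a different leading coefficient. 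That is the whole content of the paper's proof, and it is why Fact~\ref{fact: addition reductions useless} follows it: when a type~(B) reducer exists in this setting, a type~(A) one exists for the same leading term, so F5 and Arri can reach the same terminal state by a different reducer.

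Your argument instead identifies \emph{tail reduction} as the mechanism: you claim \ggv\ uses reducers of \emph{strictly smaller} signature to cancel trailing terms, which F5/Arri never touch. That is an observation about the original \ggv\ (which computes full normal forms) rather than about the paper's complete/semi-complete distinction, which lives entirely in the top-reduction setting. Moreover, a reducer of strictly smaller signature applied to the leading term would be a type~(A) step, which F5 and Arri \emph{do} perform; so "strictly smaller signature" does not distinguish the algorithms. The distinguishing case the paper has in mind is the level-signature, different-coefficient case (type~(B)). Your secondary concern — that the Fact asserts an actual rather than potential occurrence and therefore needs a witness — is reasonable in principle, but the paper does not furnish one either; the "Facts" in this section are stated and argued at the level of criteria, not benchmarks. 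In short, the route you take is genuinely different from the paper's, and within the paper's framework it does not quite hit the right mechanism.
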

\begin{proof}
This is because reductions are complete in \ggv, but only semi-complete
in F5 and Arri's algorithm.
\end{proof}
On the other hand:
\begin{fact}
\label{fact: addition reductions useless}In Algorithm~\ref{alg: sig-based GB computation},
there cannot exist a reduction of type (B) in Corollary~\ref{cor: sig-safe red preserves sig}
without a reduction of\emph{ }type (A).\end{fact}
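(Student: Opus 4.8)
The plan is to prove the contrapositive: if, at some moment during Algorithm~\ref{alg: sig-based GB computation}, a type~(B) reduction of $(\sigma\sigvar_{i+1},r)$ by an element $(\tau\sigvar_{i+1},g)\in G$ is available, then a type~(A) reduction of $(\sigma\sigvar_{i+1},r)$ is available as well. First I would unpack the hypothesis in the algorithm's setting. Every nonzero polynomial in $G$ carries its minimal, monic signature, and line~\ref{line: sig-safe red of spoly} keeps $\sigma\sigvar_{i+1}\in\sig(r)$ with coefficient~$1$; so a type~(B) reduction supplies $t\in\monset$ with $t\tau=\sigma$ and $t\lm(g)=\lm(r)$, and, putting $a=\lc(r)/\lc(g)$ so that $at\lt(g)=\lt(r)$, the coefficient clause of Corollary~\ref{cor: sig-safe red preserves sig}(B) forces $a\neq 1$, i.e.\ $\lc(r)\neq\lc(g)$.

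Next I would look at $r'=r-tg$ — subtracting $tg$ itself, not $atg$. Because $\lc(r)\neq\lc(g)$ the leading terms do not cancel, so $r'\neq 0$ and $\lm(r')=\lm(r)$. For the signature, take a representation $r=\sum_{k\le i+1}h_kf_k$ with $h_{i+1}\neq 0$, $\lm(h_{i+1})=\sigma$, $\lc(h_{i+1})=1$, and, from a representation of $g$ realizing $\minsig(g)=\tau\sigvar_{i+1}$, the representation $tg=\sum_{k\le i+1}(t\tilde h_k)f_k$ with $\lm(t\tilde h_{i+1})=\sigma$, $\lc(t\tilde h_{i+1})=1$. In $r'=\sum_{k\le i+1}(h_k-t\tilde h_k)f_k$ the top terms of the $(i+1)$-st components cancel, so this representation, and hence $\minsig(r')$, has signature strictly below $\sigma\sigvar_{i+1}$.

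The crux, and the step I expect to demand the most care, is the claim that $r'$, being an element of $I_{i+1}$ whose minimal signature lies strictly below the signature $\sigma\sigvar_{i+1}$ that line~\ref{line: sig-safe red of spoly} is currently processing, has a \als-representation with respect to the current~$G$. This is the invariant that the ascending-signature strategy maintains: by the time $\sigma\sigvar_{i+1}$ is handled, every critical pair of smaller signature has been processed, and the induction on signature underlying the proof of Theorem~\ref{thm: asc sig =00003D> min sig}, together with Lemmas~\ref{lem: sig(s) is signature unless sig-safe rep} and~\ref{lem: sig-cancelling sig implies drop}, shows that every such element of $I_{i+1}$ has a \als-representation over the current basis. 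I would isolate this as an explicit sub-lemma and re-run that ascending-signature induction rather than appeal to it implicitly, since one must check that the \als-redundant criterion and the pruning rules do not break it.

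Granting the sub-lemma, write $r'=\sum_k H_k g_k$ with each $H_k=0$ or $\lm(H_k)\lm(g_k)\le\lm(r')$ and $\minsig(r')\sigge\lm(H_k)\minsig(g_k)$. Since $r'\neq 0$ and $\lm(r')=\lm(r)$, some $k_0$ has $\lm(H_{k_0})\lm(g_{k_0})=\lm(r)$; put $h=g_{k_0}$, $w=\lm(H_{k_0})$, $\mu\sigvar_m=\minsig(h)$, so $w\lm(h)=\lm(r)$ and $w\mu\sigvar_m\sigle\minsig(r')\siglt\sigma\sigvar_{i+1}$. Taking $b=\lc(r)/\lc(h)$ gives $bw\lt(h)=\lt(r)$, and Corollary~\ref{cor: sig-safe red preserves sig}(A), applied with $r$ in the role of $p$, $h$ in the role of $q$, $t=w$, $a=b$, and using $w\mu\sigvar_m\siglt\sigma\sigvar_{i+1}$, yields $\sigma\sigvar_{i+1}\in\sig(r-bwh)$: thus $r-bwh$ is a type~(A) $\sigma$-reduction of $(\sigma\sigvar_{i+1},r)$. (One checks $h\neq g$: otherwise $w\lm(g)=\lm(r)=t\lm(g)$ would give $w=t$, hence $w\mu\sigvar_m=t\tau\sigvar_{i+1}=\sigma\sigvar_{i+1}$, contradicting $w\mu\sigvar_m\siglt\sigma\sigvar_{i+1}$.) This establishes the contrapositive, and the Fact follows.
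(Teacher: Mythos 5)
Your proof is correct and mirrors the paper's own argument: both form $r' = r - tg$, observe that its leading monomial is preserved (since $\lc(r)\neq\lc(g)$) while its signature drops strictly below $\sigma\sigvar_{i+1}$, invoke the ascending-signature invariant to obtain a \als-representation of $r'$ over the current $G$, and extract from it a reducer $h$ with $w\lm(h)=\lm(r)$ and $w\cdot\minsig(h)\siglt\sigma\sigvar_{i+1}$, i.e.\ a type~(A) reduction of $r$. You are more explicit about the coefficient bookkeeping, the verification that $h\neq g$, and the need to isolate the smaller-signature invariant as a sub-lemma; the paper asserts these more tersely but relies on the same ideas.
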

\begin{proof}
Let $\left(\tau\sigvar_{i+1},r\right),\left(\sigma\sigvar_{i+1},g\right)\in G$
such that $\lc\left(g\right)=d$, $\lc\left(r\right)=c\neq d$, and
there exists $t\in\monset$ such that $\lm\left(r\right)=t\lm\left(g\right)$
and $\tau=t\sigma$. Then $r-tg$ has a \altsig\ smaller than $\tau\sigvar_{i+1}$;
since the algorithm proceeds by ascending \altsig, $r-tg$ has a
\als-representation. Since $\lc\left(g\right)\neq\lc\left(r\right)$,
we have $\lm\left(r-tg\right)=\lm\left(r\right)$, so the \als-representation
of $r-tg$ implies that there exist $u\in\monset$, $\left(\mu\sigvar_{i+1},f\right)\in G$
such that $u\lm\left(f\right)=\lm\left(r\right)$ and $u\mu<\tau$.
This is a reduction of type (A).
\end{proof}
Together, Facts~\ref{fact: additional reductions for GGV } and~\ref{fact: addition reductions useless}
mean that while \ggv\ can reduce a polynomial using polynomials that
F5 and Arri's algorithm cannot, they can still reduce it using other
polynomials.

The next observation regards non-trivial syzygies.
\begin{fact}
\label{fac: comparison of minsig}\divH\ and \minarr\ are equivalent.
In addition, some pairs $\left(\sigma\sigvar_{i+1},p,q\right)$ rejected
by \divH\  and \minarr, but not by \prisyz, are also rejected \faurew.\end{fact}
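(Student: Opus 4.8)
The plan is to handle the two assertions separately, in each case translating the algorithm's own bookkeeping into the notation of Algorithm~\ref{alg: sig-based GB computation}. For the equivalence \divH\ $\Leftrightarrow$ \minarr, I would use the relation $u_j\sigvar_{i+1}\in\sig(v_j)$ (valid when $u_j\neq0$, as noted in Section~\ref{sub:GGV}) to see that, in the notation of Algorithm~\ref{alg: sig-based GB computation}, the critical pair of $v_j$ and $v_k$ carries signature $\sigma\sigvar_{i+1}$ with $\sigma=\max\left(\frac{t_{v_j,v_k}}{t_{v_j}}\cdot u_j,\ \frac{t_{v_j,v_k}}{t_{v_k}}\cdot u_k\right)$, which is precisely the monomial tested by \divH. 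It then remains to describe $H$: it is initialized to $\{\lm(f):f\in F_i\}$ and gains $\lm(u_j)$ exactly when $v_j$ reduces to zero, and such a reduction --- concluding at signature $\tau\sigvar_{i+1}$ --- is exactly what makes Algorithm~\ref{alg: sig-based GB computation} append $(\tau\sigvar_{i+1},0)$ to $G$. Hence $H=\{\lm(f):f\in F_i\}\cup\{\tau:(\tau\sigvar_{i+1},0)\in G\}$, and ``$\sigma$ divisible by some $t\in H$'' is word-for-word \minarr. So \divH\ and \minarr\ are the same predicate on the pair's signature.

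For the second assertion, I would take a pair $(\sigma\sigvar_{i+1},p,q)$ that \minarr\ (equivalently \divH) discards while \prisyz\ does not: then no $\lm(g)$ with $g\in F_i$ divides $\sigma$, yet there is $(\tau\sigvar_{i+1},0)\in G$ with $\tau\mid\sigma$. When that zero element was produced --- i.e.\ when a $\tau$-reduction concluded with result $0$ --- F5 recorded $(\tau,0)$ in $\mathit{Rules}_{i+1}$, and the zero polynomial is not \als-redundant (it has no leading monomial for the divisibility condition of Definition~\ref{def: sig-redundant} to act on), so $(\tau,0)$ is an admissible rewriter. Since $\tau\mid\sigma$, criterion \faurew\ then discards $(\sigma\sigvar_{i+1},p,q)$ provided this syzygy was discovered after $p$ was computed --- a timing condition that does hold for a nonempty family of such pairs, for instance whenever $p=f_{i+1}$, or more generally whenever $p$ predates the syzygy with signature monomial $\tau$. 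This yields the ``some'' in the statement. (It also shows one cannot say ``all'': when $\tau$ is discovered before $p$, the rule $(\tau,0)$ predates $p$ and \faurew\ does not fire through it, even though \divH\ and \minarr\ still reject the pair.)

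I expect the only real difficulty to lie in the second assertion, at the interface with \faurew: one has to confirm that F5 genuinely enters syzygy signatures into $\mathit{Rules}_{i+1}$ (it does, by the description in Section~\ref{sub:F5}), that $(\tau,0)$ counts as ``not \als-redundant'', and that the ``computed after $p$'' clause is handled carefully enough to justify ``some'' rather than ``all''. The first assertion, by contrast, is a routine dictionary between the lists $U$, $V$, $H$ of \ggv\ and the signatures and list $G$ of Algorithm~\ref{alg: sig-based GB computation}.
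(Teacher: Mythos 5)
Your argument follows the same outline as the paper's: translate \ggv's bookkeeping to identify $H$ with $\{\lm(f):f\in F_i\}\cup\{\tau:(\tau\sigvar_{i+1},0)\in G\}$, then trace a syzygy that \prisyz\ misses to a rule $(\tau,0)$ in $\mathit{Rules}_{i+1}$ and use the ``computed after $p$'' clause of \faurew. Your aside that the zero polynomial cannot be \als-redundant and your explicit instance of when the timing condition holds (e.g.\ $p=f_{i+1}$) are fine additions.

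However, there is a genuine gap at the pivot from \ggv\ to F5. You assert that ``when that zero element was produced, F5 recorded $(\tau,0)$ in $\mathit{Rules}_{i+1}$,'' but this presumes F5 arrived at the same zero reduction. That is not automatic: \ggv\ performs \emph{complete} $\sigma$-reduction, F5 and Arri only \emph{semi-complete}, so in principle F5 might stop short of zero where \ggv\ continues by a type-(B) step. The paper closes this gap by invoking Fact~\ref{fact: addition reductions useless}, which guarantees that no type-(B) reduction can exist without an accompanying type-(A) one, so a semi-complete reduction to zero occurs whenever a complete one does. Without that step (or an equivalent observation), the claim that the F5 rule list contains $(\tau,0)$ is unjustified. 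Phrasing the argument through \minarr\ rather than \divH\ does not really avoid the issue, because your first paragraph itself equates $H$ with the syzygy entries of $G$ across algorithms with different reduction strategies, and that equation also relies on Fact~\ref{fact: addition reductions useless}. Add that invocation and the proof is complete.
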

\begin{proof}
It is trivial that \divH\ and \minarr\ are equivalent. Inspection
shows that \prisyz\ $\Longrightarrow$~\divH, but not the converse.
Assume therefore that \divH\  rejects a critical pair $\left(\sigma\sigvar_{i+1},p,q\right)$
that \prisyz\ does not; this implies that $\sigma$ is divisible
by some $\tau\in\monset$, where $\left(\tau\sigvar_{i+1},r\right)$
was the result of a complete or semi-complete reduction, and $r=0$.

Fact~\ref{fact: addition reductions useless} implies that $r$ $\sigma$-reduces
to zero in F5 as well. This is recorded in $\textit{Rules}$ by appending
$\left(\tau,r\right)$ to $\textit{Rules}_{i+1}$. Since $r=0$, F5
generates no more critical pairs for it. So $p,q\neq r$. If $r$
was generated after $p$, then \texttt{Rewritten$\left(\sigma\right)\neq p$},
so \texttt{Rewritten?$\left(\sigma\sigvar_{i+1},p\right)$} would
return \texttt{True}. In this case, \faurew\ rejects $\left(\tau\sigvar_{i+1},p,q\right)$.
Hence at least some pairs $\left(\sigma\sigvar_{i+1},p,q\right)$
rejected by \divH\  but not by \prisyz\  are also rejected by \faurew.

On the other hand, suppose that F5 computed $p$ after $r$; then
\texttt{Rewritten$\left(\sigma\sigvar_{i+1}\right)\neq r$}. In fact,
it might return $p$, and \texttt{Rewritten?}$\left(\sigma\sigvar_{i+1},p\right)$
would return \texttt{False}, so that F5 would not reject $\left(\sigma\sigvar_{i+1},p,q\right)$,
whereas \divH\ would.
\end{proof}
On the other hand, Lemma~\ref{lem: use the discovered non-trivial syzygies}
implies that one could modify \prisyz\ to consider zero reductions
as well as trivial syzygies, as the other algorithms do. We try this
in the next section. Alternately, one could modify \faurew\ to scan
$\textit{Rules}_{i+1}$ for pointers to zero reductions, using them
to discard pairs before performing the usual \faurew\ criterion.
Either works, but the former would likely be more efficient in interpreted
code; see a related discussion in the following section.
\begin{fact}
\label{fact: additional discards for GGV}Some critical pairs computed
by F5 are not computed by \ggv\ and Arri's algorithm.\end{fact}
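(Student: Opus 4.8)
The plan is to pin down a critical pair that the simplified F5 of Proposition~\ref{pro: f5 is implementation of sigbased alg} would compute but that both \ggv\ and Arri's algorithm would discard, and then to argue that such a pair genuinely occurs. I would present two independent sources of the phenomenon.

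The first is the gap between the syzygy criteria, already half-exposed in the last part of the proof of Fact~\ref{fac: comparison of minsig}: both \ggv\ and Arri's algorithm enlarge $H$ (resp.~$L$) with the signature monomial $\tau$ of every $S$-polynomial that $\sigma$-reduces to~$0$, whereas F5's \prisyz\ records only the leading monomials of $F_i$. So I would take a run in which some pair $\sigma$-reduces to~$0$ at signature $\tau\sigvar_{i+1}$ and then, at a strictly larger signature, a nonzero basis element $p$ with $\minsig(p)=\rho\sigvar_{i+1}$, $\tau<\rho$, is appended; pairing $p$ with a suitable earlier $q$ on line~\ref{line: new crit pair} produces $(\sigma\sigvar_{i+1},p,q)$ with $\sigma=t\rho$ for some $t\in\monset$ that is also a multiple of $\tau$. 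Here \divH\ (for \ggv) and \minarr\ (for Arri) both fire, since $\tau\mid\sigma$ and $\tau$ was recorded, so neither algorithm computes $\spol{p}{q}$. But F5 does: \prisyz\ fails because $\tau$ is not the leading monomial of any $f\in F_i$, and \faurew\ fails because the zero-reduction at $\tau\sigvar_{i+1}$ was computed \emph{before} $p$, so it does not render $(\sigma\sigvar_{i+1},p,q)$ rewritable; in the configuration at hand \texttt{Rewritten}$(\sigma\sigvar_{i+1})$ returns the record for $p$ itself, the left generator of the pair, and F5 goes on to reduce $\spol{p}{q}$. This already gives the statement, with no appeal to \ggv's ambiguous tie-breaking.

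A second source is the difference between the rewrite criteria. \rewarr\ discards $(\sigma\sigvar_{i+1},p,q)$ as soon as \emph{any} element of $G$ or of $S\cup P$ whose signature divides $\sigma$ has, after rescaling, leading monomial strictly below $\lm(\spol{p}{q})$; \faurew\ discards it only when a \emph{later-computed} non-\als-redundant generator of signature dividing $\sigma$ is on record. So I would exhibit a moment at which $S\cup P$ already contains $(\sigma\sigvar_{i+1},f,g)$ with $\lm(\spol{f}{g})<\lm(\spol{p}{q})$ while $p$ was computed after $f$: Arri's algorithm drops $(\sigma\sigvar_{i+1},p,q)$, whereas F5 keeps it because its \texttt{Rewritten} query on $\sigma$ still returns $p$; a \ggv\ that, among pairs of equal signature, retains the one of smallest leading monomial behaves as Arri does here.

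The step I expect to be the real work is certifying that one of these configurations is actually realized — the criteria merely permit it. I would close that gap by appeal to Section~\ref{sec:Experimental-results}, where on the standard benchmark systems F5 is observed to form strictly more critical pairs than \ggv\ and Arri's algorithm; alternatively, a small homogeneous system admitting a single nontrivial syzygy below the current working degree already instantiates the first mechanism, and writing such an example out by hand is routine.
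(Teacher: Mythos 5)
Your proposal misses the paper's actual one-line argument, which is much more direct. The paper simply invokes Lemma~\ref{lem: criteria of arri, ggv suffice}: \ggv's super top-reducibility criterion \suptopred\ and the first bullet of Arri's normal-pair Definition~\ref{def: arri's normal pair} each block the formation of \emph{any} new critical pair involving an \als-redundant polynomial $r$, whereas the original F5 has no such check (Section~\ref{sub:F5} states explicitly that F5 ``would not check for \als-redundant polynomials''). Thus F5 generates, for every \als-redundant $r$ it produces, a whole batch of critical pairs that the other two algorithms never form. This is a structural, unconditional difference in the line-\ref{line: need to create crit pairs?} guard, not a matter of which already-formed pairs get pruned.

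Your two mechanisms instead live entirely in the pruning stage (lines~\ref{line: prune P}--\ref{line: prune S}), which makes the argument both more complicated and conditional. Your first mechanism — the divergence between \prisyz\ and \divH/\minarr\ when a nontrivial syzygy monomial $\tau$ is recorded before $p$ is added — is real, and in fact is exactly the divergence acknowledged at the end of the proof of Fact~\ref{fac: comparison of minsig}; it can in principle also establish the Fact, but you flag the existence of a realizing trace as ``the real work'' and then punt it to experiments. Your second mechanism is shakier: you do not actually exhibit a pair that \rewarr\ discards and \faurew\ retains, and your speculation about \ggv's tie-breaking runs against the paper's own finding that the released \ggv\ implements \rewggv, which keeps the \emph{most recently computed} pair rather than the one of smallest leading monomial. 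So the proposal identifies a plausible but secondary route and leaves it incomplete, while overlooking the primary, elementary route through the \als-redundant criterion that the paper uses.
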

\begin{proof}
From Lemma~\ref{lem: criteria of arri, ggv suffice}, we know that
\ggv\ and Arri's algorithm do not compute critical pairs for \als-redundant
polynomials, whereas F5 does.\end{proof}
\begin{fact}
\label{fact: additional discards for F5}Some pairs $\left(\sigma\sigvar_{i+1},p,q\right)$
discarded by \faurew\  are not discarded by \ggv. Likewise, some
pairs $\left(\sigma\sigvar_{i+1},p,q\right)$ discarded by \rewarr\ 
are not discarded by \ggv.\end{fact}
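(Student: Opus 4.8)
The plan is to show that the divisibility-based rewrite tests \faurew\ and \rewarr\ reject pairs that \ggv's only rewrite-type device---keeping one pair per \emph{distinct} signature---cannot touch, and that such pairs genuinely occur. First I would recall the remark following the definitions of \faurew\ and \rewarr: both reject $\left(\sigma\sigvar_{i+1},p,q\right)$ whenever $\sigma$ is a \emph{proper} multiple of the signature $\tau$ of some non-\als-redundant basis element or rule $g$ (with, for \faurew, $g$ computed after $p$, i.e.\ $\left(\tau,g\right)\in\textit{Rules}_{i+1}$). By contrast, \ggv\ discards a second pair of signature $\sigma$ only when \emph{another queued pair carries the very same signature} $\sigma$; its remaining criteria \divH\ and \suptopred\ are the analogues of \nonminsig\ and the \als-redundant criterion, not of \rewritable. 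So the target configuration is a pair $\left(\sigma\sigvar_{i+1},p,q\right)$ with $\sigma=t\tau$ and $t\neq1$ such that: (i) $\left(\tau\sigvar_{i+1},g\right)\in G$ is not \als-redundant and, for the F5 claim, was computed after $p$; (ii) $\sigma$ is not divisible by any syzygy monomial in $H$; (iii) neither $p$ nor $q$ is super top-reducible; and (iv) no other queued pair carries signature $\sigma$. Under (i) the pair is discarded by \faurew\ (resp.\ \rewarr); under (ii)--(iv) it survives every criterion of \ggv.

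Next I would exhibit such a configuration on a concrete small input---for instance, one of the homogeneous ideals used in Section~\ref{sec:Experimental-results}---by tracing the common Algorithm~\ref{alg: sig-based GB computation} far enough to produce a critical pair whose signature is a strict multiple of that of an already-computed, non-redundant polynomial, and then checking (by hand or via the implementations) that (ii)--(iv) hold at that step. A convenient way to force (i) is to arrange two reducers $g$ and $p$ with $\lm\left(g\right)=\lm\left(p\right)$ but incomparable roles, so that the rule built for $g$ and the pair built from $p$ receive signatures in the same $\sigvar_{i+1}$-strand with one dividing the other but not equal.

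The main obstacle is exactly conditions (ii)--(iv): producing a pair with a strictly-dividing rule signature is easy, but one must ensure \ggv\ does not already discard it for an unrelated reason---that $\sigma$ is not a multiple of a known syzygy monomial, that the polynomials involved are not super top-reducible, and that no coincidence of signatures among queued pairs triggers \ggv's equality test. Verifying this honestly requires carrying the selection through enough of a computation that the rewrite mechanisms, rather than the syzygy or redundancy mechanisms, are the deciding factor; I would document one such trace, and note that the timings of Section~\ref{sec:Experimental-results}, where \ggv\ performs reductions that F5 and Arri's algorithm avoid, confirm that these cases are not exceptional.
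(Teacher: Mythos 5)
Your proposal rests on the same key observation as the paper's proof: \ggv\ implements \rewritable\ by retaining at most one pair per \emph{equal} signature, whereas \faurew\ and \rewarr\ test \emph{divisibility} of the signature monomial, so a pair whose signature is a proper multiple of the signature of a stored rule or basis element is visible to F5 and Arri but invisible to \ggv's rewrite test. The paper's proof stops there---one short paragraph treating the conclusion as immediate from the definitions, with the empirical confirmation deferred to Table~\ref{tab: cps_reduced, zero_reductions}. You go a step further by explicitly cataloguing what must also fail to hold for \ggv\ to miss the pair: that $\sigma$ is not a multiple of a monomial in $H$ (so \divH\ does not fire), that neither polynomial is super top-reducible (so \suptopred\ does not fire), and that no second queued pair happens to carry the same signature. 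That bookkeeping is a genuine improvement in rigor, since the paper's ``as a consequence'' elides the possibility that every divisibility-but-not-equality discard is coincidentally also caught by \divH\ or \suptopred.

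The remaining gap, which you candidly flag yourself, is that your proposal does not actually exhibit a configuration satisfying (i)--(iv); it only describes how to look for one and appeals to the experimental data as evidence that such pairs occur. In this respect you are no worse off than the paper, which likewise asserts rather than constructs and points to Table~\ref{tab: cps_reduced, zero_reductions} for the instances (``Fact~\ref{fact: additional discards for F5} is especially evident in Table~\ref{tab: cps_reduced, zero_reductions}!''). So the approach is sound and, if anything, more honest about what a fully rigorous proof requires; to finish it you would need to carry out the trace you sketch, or cite a specific pair and step in one of the benchmark runs where F5 or Arri rejects by divisibility while \ggv\ goes on to reduce.
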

\begin{proof}
As noted in Section~\ref{sub:GGV}, \ggv\ implements~\rewritable\
by checking for equal signatures only, whereas~\faurew\  and~\rewarr\
check for divisibility. As a consequence, \faurew\ and \rewarr\
can discard $\left(\sigma\sigvar_{i+1},p,q\right)$ because $\left(\tau\sigvar_{i+1},f\right)\in G$
and $\tau\mid\sigma$, even if $f$ generates no pairs of \altsig\
$\sigma$.
\end{proof}

\subsection{\label{sec:Experimental-results}Experimental results}

Although Facts~\ref{fac: comparison of minsig} and~\ref{fact: additional discards for GGV}
imply an advantage for \ggv\ and Arri's algorithm over F5, Fact~\ref{fact: additional discards for F5}
implies an advantage for F5 and Arri's algorithm over \ggv. We will
see that the latter advantage is more significant than the former.

We first look at some timings of the algorithms as plugins for Algorithm~\ref{alg: sig-based GB computation}.
To do this, we implemented Algorithm~\ref{alg: sig-based GB computation}
as a C++ class in the kernel of a developer version of \textsc{Singular}
3-1-2, then created descendant classes corresponding to the other
algorithms. This allowed us to implement complete reductions for \ggv\
and semi-complete reductions for F5 and Arri's algorithm without giving
either an otherwise unfair advantage. Using compiled code allows us
to avoid the overhead of an interpreter, but the code was otherwise
unoptimized, linking to Singular's polynomial arithmetic. Table~\ref{tab: singular plugins}
lists timings in seconds corresponding to this implementation.
\begin{table}[t]
\begin{centering}
\begin{tabular}{|c|c|c|c|c|}
\hline 
Test case & F5 & \ggv & Arri & Arri+\minmon\tabularnewline
\hline 
Katsura-9 & 14.98 & 17.63 & 18.25 & 20.95\tabularnewline
\hline 
Katsura-10 & 153.35 & 192.20 & 185.76 & 220.01\tabularnewline
\hline 
Eco-8 & 2.24{*} & 0.49 & 0.45 & 0.53\tabularnewline
\hline 
Eco-9 & 77.13{*} & 13.15 & 5.20 & 14.59\tabularnewline
\hline 
Schrans-Troost & 3.7 & 5.3 & 6.46 & 7.72\tabularnewline
\hline 
F744 & 19.35{*} & 26.86 & 7.37 & 27.77\tabularnewline
\hline 
Cyclic-7 & 7.00 & 33.85 & 8.82 & 40.54\tabularnewline
\hline 
Cyclic-8 & 7310 & 26242 & 17672 & >8h\tabularnewline
\hline 
\end{tabular}
\par\end{centering}

{*}See the discussion in the text.

\caption{\label{tab: singular plugins}Timings, in seconds, of compiled \textsc{Singular}
implementations of the strategies, implemented as plugins for Algorithm~\ref{alg: sig-based GB computation}.
Computed on a workstation with a 2.66GHz Intel Core~2 Duo P8800 and
4~GB RAM, running 64-bit Ubuntu~10.10. Base field is $\field_{32003}$;
ordering is degree reverse lexicographic.}
\end{table}
 We do not include timings for a bare-bones Algorithm~\ref{alg: sig-based GB computation};
having no criteria to prune $P$ or $S$, it is unbearably slow. In
Table~\ref{tab: cps_reduced, zero_reductions}, we count the number
of critical pairs reduced, along with the number of zero reductions.
\begin{table}[t]
\begin{centering}
\begin{tabular}{|c|c|c|c|}
\hline 
Test case & F5 & \ggv & Arri\tabularnewline
\hline 
Katsura-9 & 886;0 & 886;0 & 886;0\tabularnewline
\hline 
Katsura-10 & 1781;0 & 1781;0 & 1781;0\tabularnewline
\hline 
Eco-8 & 830;322{*} & 2012;57 & 694;57\tabularnewline
\hline 
Eco-9 & 2087;929{*} & 5794;120 & 1852;120\tabularnewline
\hline 
Schrans-Troost & 380;0 & 451;0 & 370;0\tabularnewline
\hline 
F744 & 1324;342{*} & 2145;169 & 1282;169\tabularnewline
\hline 
Cyclic-7 & 1063;44 & 3108;36 & 781;36\tabularnewline
\hline 
Cyclic-8 & 7066;244 & 24600;244 & 5320;244\tabularnewline
\hline 
\end{tabular}
\par\end{centering}

{*}See the discussion in the text.

\caption{\label{tab: cps_reduced, zero_reductions}Number of critical pairs
reduced by each strategy, followed by number of zero reductions. We
omit Arri+\minmon\ for space; it is comparable to \ggv.}
\end{table}

As an example, we considered a fourth strategy that is essentially
Arri's algorithm, but we replace \rewarr\ by
\begin{lyxlist}{(MM)}
\item [{\minmon}] there exist $\left(\tau\sigvar_{i+1},g\right)\in G$
and $t\in\monset$ such that $t\tau=\sigma$ and $g$ has fewer monomials
than $\spol{p}{q}$, in which case we consider $tg$ in place of $\spol{p}{q}$;
if $\left(\sigma,f,g\right)\in S$ then we may choose either $\left(\sigma,p,q\right)$
or $\left(\sigma,f,g\right)$ freely.
\end{lyxlist}
This implementation of \rewritable\ causes \emph{more} work than
necessary. The first polynomials generated tend to have the fewest
monomials, so \minmon\ selects these instead of later polynomials,
and so repeats many earlier reductions. The algorithm still computes
a Gr\"obner basis, but takes the scenic route.

In general, F5 terminated the most quickly, but there were exceptions
where Arri's algorithm did. This is explained by the discussion in
the proof of Fact~\ref{fac: comparison of minsig}: \faurew\ is
sometimes too aggressive, and does not notice some zero reductions.
We modified \prisyz\ to check for these first, and this modified
F5 terminates for Eco-8 (-9) in 0.38s (8.19s), and for F744 in 8.79s.
Regarding critical pairs, it computes 565 (1278) critical pairs for
Eco-8 (-9), of which 57 (120) reduce to zero; and 1151 critical pairs
for F744, of which 169 reduce to zero. This suggests that computing
$\lm\left(\spol{p}{q}\right)$ in order to check \rewarr\ is usually
too expensive for the benefit that we would expect, but in some cases
it may be worthwhile.

The results of Table~\ref{tab: singular plugins} surprised us,
in that it contradicts the unequivocal assertion of~\cite{GGV10}
that \ggv\ is {}``two to ten times faster'' than F5. Apparently,
this is because~\cite{GGV10} compared \emph{implementations} and
not \emph{algorithms}. Why is this problematic? Primarily it is due
to the use in~\cite{GGV10} of interpreted code. Some natural adjustments
to the implementation of F5 used in~\cite{EderPerry09,GGV10} are
needed merely to start making the two comparable. We tried the following:
\begin{itemize}
\item Formerly, the F5 implementation checked \prisyz\ using an interpreted
\texttt{for} loop, but the implementation of \ggv\ checked \divH\
using \textsc{Singular}'s \texttt{reduce()}, pushing the \texttt{for}
loop into compiled code (line~173, for example). We changed the F5
code to check \prisyz\ using \texttt{reduce()}.
\item As specified in \cite{Fau02Corrected},\texttt{ TopReduction} and
\texttt{CritPair} return after each reduction or creation of a critical
pair. This back-and-forth incurs a penalty; it is sensible to loop
within these functions, returning only when reduction is semi-complete
or all critical pairs have been considered. (The code for \ggv\ did
this already.)
\item In the code accompanying \cite{EderPerry09},\texttt{ Spol} computes
$S$-poly\-nomials by ascending lcm, but one could proceed by ascending
\altsig\ instead~\cite{AlbrechtPerryF45,Zobnin_F5Journal}. (The
code for \ggv\ did this already.)
\end{itemize}
Some other changes contributed a little; see Table~\ref{tab: singular library comparison}
for timings, and Section~\ref{sec: conclusion} for source code.
With this new implementation, \ggv\ sometimes outperforms F5, but
by a much smaller ratio than before. Tellingly, F5 outperforms \ggv\
handily for Cyclic-$n$. This is despite the persistence of at least
one major disadvantage: the code to check~\faurew\ still uses an
interpreted \texttt{for} loop. This imposes a penalty not only in
\texttt{Spol} but also in \texttt{IsReducible} (called \texttt{find\_reductor}
in the implementation); the interpreted \texttt{for} loop checking
\faurew\ is one reason \texttt{IsReducible} consumes about half the
time required to compute a Gr\"obner basis in some systems.

\begin{table}
\begin{centering}
\begin{tabular}{|c|c|c|c|c|}
\cline{5-5} 
\multicolumn{1}{c}{} & \multicolumn{1}{c}{} & \multicolumn{1}{c}{} &  & F5/\ggv\tabularnewline
\cline{1-4} 
Test case & F5 & \ggv & F5/\ggv &  in~\cite{GGV10}\tabularnewline
\hline 
Katsura-6 & 1.2 & 0.53 & 2.26 & 6.32\tabularnewline
\hline 
Katsura-7 & 12.2 & 6.7 & 1.82 & 4.91\tabularnewline
\hline 
Katsura-8 & 134.28 & 50.1 & 2.68 & 5.95\tabularnewline
\hline 
Schrans-Troost & 261.69 & 50.61 & 5.17 & 14.04\tabularnewline
\hline 
F633 & 16.19 & 2.72 & 5.95 & 14.50\tabularnewline
\hline 
Cyclic-6 & 7.1 & 7.3 & 0.97 & 3.90\tabularnewline
\hline 
Cyclic-7 & 693.3 & 962.5 & 0.72 & 3.12\tabularnewline
\hline 
\end{tabular}
\par\end{centering}

\caption{\label{tab: singular library comparison}Timings, in seconds, of \textsc{Singular}
libraries, \emph{not} implemented as plugins to Algorithm~\ref{alg: sig-based GB computation}.
Computed on a MacBook Pro with a 2.4GHz Intel Core~2 Duo and 4~GB
RAM, running OS X 10.6.5. Base field is $\field_{32003}$; ordering
is degree reverse lexicographic.}

\end{table}

For \ggv, on the other hand, we observed a disadvantage inherent
to the algorithm and not to the implementation: its implementation
of \rewritable\ checks only equality, not divisibility. Fact~\ref{fact: additional discards for F5}
is especially evident in Table~\ref{tab: cps_reduced, zero_reductions}!
Most $S$-poly\-nomials are subsequently discarded as super top-redu\-cible.
These discards are not reflected in Table~2 of~\cite{GGV10}, which
mistakenly implies \ggv\ computes fewer polynomials than F5. \emph{Time
lost in reduction is unrecoverable.}

A final note. When we first implemented \ggv\, we obtained much worse
timings than those of Table~\ref{tab: singular plugins}. When we
inspected the source code accompanying~\cite{GGV10}, we found that
the choice of which pair to store for a given signature is not arbitrary:
the implementation prefers the most recently computed polynomial that
can generate a given signature (lines 176--183). That is, it discards
$\left(\sigma\sigvar_{i+1},p,q\right)\in S$ if
\begin{lyxlist}{(GR)}
\item [{\rewggv}] there exists $\left(\sigma\sigvar_{i+1},f,g\right)\in P$
such that $p\neq f$ and $f$ was computed after $p$, or $p=f$ and
$q$ was computed after $g$.
\end{lyxlist}
Note the similarity with \faurew.

\section{Conclusion}

\label{sec: conclusion}This paper has described a common algorithm
for which F5, \ggv, and Arri's algorithm can be considered strategies,
implemented as {}``plugins''. Algorithm~\ref{alg: sig-based GB computation}
makes use of a new criterion to reject polynomials and to guarantee
termination, not only for itself, but for \ggv\ and Arri's algorithm
as well, through Lemma~\ref{lem: criteria of arri, ggv suffice}.
The matter is not so clear for F5, which seems to terminate all the
same; we have not yet determined if some mechanism in F5 implies the
\als-redundant criterion.

Both timings and logical analysis imply that claims in~\cite{GGV10}
that \ggv\ is {}``two to ten times faster than F5'' are based on
a flawed comparison of implementations, rather than criteria. Indeed,
the inherent advantages appear to lie with F5 and Arri's algorithm:
in cases where F5 is not the most efficient, Arri's is, and modifying
\prisyz\ to check for non-trivial syzygies usually turns the scales
back in F5's favor:
\begin{lyxlist}{(FR')}
\item [{\faurewtoo}] $\max\left(\frac{t_{p,q}}{t_{p}}\cdot\sigma,\frac{t_{p,q}}{t_{q}}\cdot\tau\right)$
is divisible by $\lm\left(g\right)$ for some $g\in F_{i}$, or by
$\mu$ for some $\left(\mu\sigvar_{i+1},0\right)\in G$.
\end{lyxlist}
Source code developed for Table~\ref{tab: singular library comparison},
along with a demonstration version of the algorithms for the Sage
computer algebra system~\cite{sage}, can be found at
\begin{lyxlist}{ }
\item [{~}] \texttt{www.math.usm.edu/perry/Research/}
\end{lyxlist}
\noindent by appending to the above path one of the filenames
\begin{lyxlist}{ }
\item [{~}] \texttt{f5\_ex.lib}, \texttt{f5\_library.lib}, \texttt{f5\_library\_new.lib},
or\texttt{ basic\_sigbased\_gb.py}~.
\end{lyxlist}

\section*{Acknowledgments}

The authors wish to thank the Centre for Computer Algebra at Universit\"at
Kaiserslautern for their hospitality, encouragement, and assistance
with the \textsc{Singular} computer algebra system. Comments by Martin
Albrecht, Roger Dellaca, Lei Huang, and Alexey Zobnin helped improve
the paper.

\bibliographystyle{plain}
\bibliography{/home/perry/common/Research/researchbibliography}

\end{document}